\documentclass{article}%

\usepackage{graphicx}
\usepackage{amsmath,amssymb,amsfonts}%
\usepackage{theorem}%
\usepackage{color}%
\usepackage{hyperref}
\usepackage[font={small, it}]{caption}
\setlength{\textwidth}{17cm}
\setlength{\textheight}{9in}
\setlength{\oddsidemargin}{0in}
\setlength{\topmargin}{-1.5cm}
\usepackage{float}

\setlength{\parindent}{0ex}%
\setlength{\parskip}{1ex}%

\theoremstyle{change}%
\sloppy%

\newtheorem{definition}{Definition:}[section]%
\newtheorem{proposition}[definition]{Proposition:}%
\newtheorem{theorem}[definition]{Theorem:}%
\newtheorem{lemma}[definition]{Lemma:}%
{\theorembodyfont{\rmfamily}\newtheorem{remark}[definition]{Remark:}}%
{\theorembodyfont{\rmfamily}}%

\newenvironment{proof}
{{\bf Proof:}}
{\qquad \hspace*{\fill} $\Box$}%

\newcommand{\fh}{\mathfrak{h}}%
\newcommand{\fl}{\mathfrak{l}}

\newcommand{\tr}{\operatorname{tr}}%

\newcommand{\inner}{\operatorname{int}}%
\newcommand{\rme}{\mathrm{e}}%

\newcommand{\CC}{\mathcal{C}}%
\newcommand{\OC}{\mathcal{O}}%
\newcommand{\XC}{\mathcal{X}}%
\newcommand{\DC}{\mathcal{D}}%

\newcommand{\HB}{\mathbb{H}}%
\newcommand{\T}{\mathbb{T}}%
\newcommand{\N}{\mathbb{N}}%
\newcommand{\R}{\mathbb{R}}%
\newcommand{\Z}{\mathbb{Z}}%

\begin{document}
	
	\title{Linear control systems on the homogeneous spaces of the Heisenberg group}

\author{Adriano Da Silva \\
		Departamento de Matem\'atica,\\Universidad de Tarapac\'a - Iquique, Chile.
		\and
		Okan Duman\; and\; Ey\"up Kizil \\
		Department of Mathematics \\
		Yildiz Technical University - Istanbul, Turkey.\\
}
\date{\today }
\maketitle

\begin{abstract}
 Let $\HB$ denote the 3-dimensional Heisenberg Lie group. The present paper classify all possible linear control systems on the homogeneous spaces of $\HB$ through its closed subgroups and expose a detailed study on the control behavior (controllability property and control sets) of a particular dynamics evolving on a non simply connected homogeneous (state) space of dimension two.
 
\end{abstract}

{\small {\bf Keywords:} Linear control systems, Heisenberg group, homogeneous spaces} 
	
{\small {\bf Mathematics Subject Classification (2020): 93B05, 93C05, 22E25 }}%

\section{Introduction}
Linear control systems on Lie groups appear as a natural generalization to connected Lie groups of the well-known class of linear systems on Euclidean spaces (See Ayala-Tirao paper in \cite{Ayala} and also the paper \cite{Markus} by L. Marcus on classical matrix Lie groups). Here, we consider the 3-dimensional Heisenberg Lie group $\HB$ together with its closed subgroups (discrete subgroups included and normal subgroups excluded \footnote{We exclude normal subgroups of $\HB$ since otherwise the corresponding homogeneous spaces receive a Lie group structre and linear control systems on such state
spaces has been already studied in a series of papers. See, \cite{Ayala}, 
\cite{VA1},\cite{VA2},\cite{VA3},\cite{VAP}}) to form its homogeneous spaces and classify on such state spaces all possible linear control systems, which is not a trivial task. The motivation comes from a recent result by P. Jouan in \cite{Jouan} that emphasizes a quite interesting connection between a control affine system on a manifold and a linear system either on a Lie group or a homogeneous space. More precisely, a control affine system on a manifold is equivalent by mean of a diffeomorphism to a linear control system on a Lie group or a homogeneous space if and only if the vector fields that describe the system are complete and generate a finite dimensional Lie algebra. It follows that one might find in some suitable context a control system on a manifold that is equivalent to a linear control system on a homogeneous space of $\HB$. Hence, we find it convenient to give in this article complete characterization of all possible linear systems on homogeneous spaces of $\HB$ and deal with dynamical properties of such systems as a concrete case. To have such linear systems on homogeneous spaces of the Heisenberg group we have to determine explicitly the conditions that guarantee well-defined induced dynamics on various quotient spaces. Since this requires a certain invariance criteria of subgroups of $\HB$ under the flow of the drift (i.e., a linear vector field) of the original dynamics (See Proposition 4.1., \cite{Jouan}) we start with listing these conditions first to obtain the induced or projectable drift and control vectors (i.e., left-invariant vector fields) on the corresponding homogeneous spaces. 

It should be noted that determining controllability property, characterizing eventual topological properties of control sets (i.e.
 regions of approximate controllability) of all of these systems becomes highly non-trivial job. For example, even on low dimensional groups the properties of control sets for such dynamics on Lie groups and homogeneous spaces might be quite different (See \cite{VAAA} and \cite{VA1} ). Hence, we select among others only a certain 1-dimensional subgroup of $\HB$ to form a 2-dimensional non-simply connected homogeneous space as the state space and consider a particular linear system on it, for which we are able to fully characterize the control sets. A much more detailed and complex work will be left to a future work.
 
 The paper is divided into 6 sections. In Section 2, we mention some generalities in control setting to facilitate a better understanding of the rest of the manuscript. In section 3, we fix the format on which the whole exposition is based on. More precisely, rather than the group of upper triangular matrices with only 1s in the main diagonal we prefer to interpret the Heisenberg group $\HB$ as the cartesian product $\mathbb{R}^{2}\times \mathbb{R}$ and express all the necessary arguments such as the group multiplication, invariant and linear vector fields and their Lie brackets, etc to be in accordance with this format. With that, we are able to obtain, up to isomorphism, any closed subgroup of $\HB$ with dimension 0,1 and 2. We conclude the section with a brief resume of linear control systems on Lie groups which will be used through the subsequent sections. Section 4 focuses on a  certain invariance criteria of subgroups (that is, discrete and non normal subgroups) of $\HB$ under the flow of a linear vector field. By using the classification of closed subgroups in Section 3, we are able to obtain in Proposition 4.4 the conditions a linear vector field should satisfy in order to achieve the desired invariance condition of the subgroups under consideration. In Section 5, we define what we mean by a linear control system on homogeneous spaces of $\HB$ and list, up to equivalence, all possible such systems. In the last section 6, we constrain our attention to a particular linear system on a certain homogeneous space of $\HB$ to characterize topologically its control sets and also controllability. See Lemma 6.1 and Theorem 6.2

\section{Preliminaries}
Let $M$ be a finite dimensional smooth manifold and let $\mathbb{R}^{m}$
denote the $m$-dimensional Euclidean space. Given a compact convex subset $%
\Omega \subset \mathbb{R}^{m}$ satisfying $0\in $ int $\Omega $, we mean by
a control-affine system evolving on $M$ the following (parametrized) family
of ordinary differential equations 
\begin{equation*}
\Sigma _{M}:\quad \dot{x}(t)=f_{0}(x(t))+\sum_{j=1}^{m}\omega_{j}(t)f_{j}(x(t)),%
\quad \omega\in \mathcal{U}
\end{equation*}%
where $f_{0},f_{1},\ldots ,f_{m}$ are smooth vector fields defined on $M$
and the control parameter $\omega=\left(\omega_{1},\ldots, \omega_{m}\right) $ belongs to
the set $\mathcal{U}$ of the piecewise constant functions such that $\omega(t)\in
\Omega$. We also assume w.l.o.g. $m\leq \dim M$ and that the set $\{f_1, \ldots, f_m\}$ is linearly independent in the set of the smooth vector fields on $M$.

For an initial state $x\in M$ and $\omega\in \mathcal{U}$, the solution of $%
\Sigma _{M}$ is the unique absolutely continuous curve $t\mapsto \phi (t,x,\omega)
$ on $M$ satisfying $\phi (0,x, \omega)=x$. Associated to $\Sigma _{M}$ we have
for a given $x\in M$ the positive orbit at $x$ as follows:%
\begin{eqnarray*}
\mathcal{O}^{+}(x) &=&\{\phi (t,x,u):t\geq 0,\omega\in \mathcal{U}\}
\end{eqnarray*}%
We say that $\Sigma _{M}$ satisfies the \emph{Lie algebra rank condition}
(abrev. LARC) if $\mathcal{L}(x)=T_{x}M$ for all $x\in M,$ where $\mathcal{L}
$ denotes the smallest Lie algebra of vector fields containing $\Sigma _{M}$%
. The system $\Sigma _{M}$ is said to \emph{controllable} if $M=\mathcal{O}%
^{+}(x)$ for all $x\in M$.

Next we introduce the concept of control sets encountered \cite{FW}. 

\begin{definition}
A set $\mathcal{C}\subset M$ is a control set of $\Sigma _{M}$ if it is
maximal, w.r.t. set inclusion, with the following properties:
\begin{enumerate}
\item $\forall x\in \mathcal{C}$, there exists a control $u\in \mathcal{U}$
such that $\phi \left( \mathbb{R}^{+},x,u\right) \subset \mathcal{C}$;
\item It holds that $\mathcal{C}\subset \mathrm{cl}~\mathcal{O}^{+}(x)$ for
all $x\in \mathcal{C}$.
\end{enumerate}
\end{definition}

Let $N$ be another smooth manifold and
\begin{equation*}
\Sigma _{N}:\quad \dot{y}(t)=g_{0}(y(t))+\sum_{j=1}^{m}\omega_{j}(t)g_{j}(y(t)),%
\quad \omega\in \mathcal{U}
\end{equation*}%
a control-affine system on $N$.

\begin{definition}
\label{defi:conjugado}If $\varphi :M\rightarrow N$ is a smooth map, we say
that $\Sigma _{M}$ and $\Sigma _{N}$ are $\varphi $-conjugated if their
respective vector fields are $\varphi $-conjugated, that is, 
\begin{equation*}
\varphi _{\ast }\circ f_{j}=g_{j}\circ \varphi
\end{equation*}%
for each $j\in \{0,1,\ldots ,m\}$. If such a $\varphi $ exists, we say that $%
\Sigma _{M}$ and $\Sigma _{N}$ are conjugated. In particular, if $\varphi $
is a diffeomorphism, then $\Sigma _{M}$ and $\Sigma _{N}$ are called
equivalent systems.
\end{definition}

It follows that equivalent systems preserve controllability, topological
properties of control sets and positive (or negative) orbits.

Since in the sequel we also consider control-affine systems on a connected
Lie group (and hence its corresponding homogeneous space) we find it
convenient to provide some basic definitions and facts involving Lie groups
and their Lie algebras.

\begin{definition}
A vector field $\mathcal{X}$ on a connected Lie group $G$ is linear if its
flow $\{\varphi _{t}\}_{t\in \mathbb{R}}$ is a 1-parameter subgroup of $%
\mathrm{Aut}(G)$, the group of all automorphisms of $G$.
\end{definition}

It is well known that a linear vector field on a connected Lie group $G$ is
complete and one can always associate to such a vector field a derivation $%
\DC=-ad(\mathcal{X})$ of the corresponding Lie algebra $\mathfrak{g}$ of $G$.
Recall that a Lie algebra derivation $D$ of $\mathfrak{g}$ is a linear map
on $\mathfrak{g}$ satisfying the Leibnitz rule, that is, $%
\DC[X,Y]=[\DC X,Y]+[X,\DC Y]$ for every $X,Y\in \mathfrak{g}$. Although the converse
does not occur in general we have for a connected and simply connected Lie
group $G$ that given a derivation $\DC$ of the Lie algebra $\mathfrak{g}$ of $G
$, there exists a linear vector field associated to $\DC$ through the formula $%
(d\varphi _{t})_{e}=e^{t\DC},$ $\forall t\in \mathbb{R}$, where $\varphi_{t}$
stands for the flow. In particular, $\varphi _{t}(\exp Y)=\exp (e^{t\DC}Y)$
for every $Y\in \mathfrak{g}$ and $t\in \mathbb{R}$.
		
		\section{The Heisenberg group and its homogeneous spaces}
		
		Throughout the exposition, we let $\HB$ denote the 3D Heisenberg (Lie) group and $\fh$ its Lie algebra.



For simplicity, we will consider the Heisenberg group as $\HB=\mathbb{R}^{2}\times \mathbb{R}$, with product given by 
\begin{equation*}
(\mathbf{v}_{1},z_{1})\ast (\mathbf{v}_{2},z_{2})=\left(\mathbf{v}_{1}+\mathbf{v}%
_{2},z_{1}+z_{2}+\frac{1}{2}\left\langle \mathbf{v}_{1}, \theta\mathbf{v}_{2}\right\rangle\right), \hspace{.5cm}\mathbf{v}_{i}\in\R^2, z_i\in\R, i=1, 2,
\end{equation*}%
where $\left\langle \cdot ,\cdot \right\rangle $ stands for the standard
inner product in $\mathbb{R}^{2}$ and $\theta$ stands for the 
counter-clockwise rotation of $\frac{\pi}{2}$-degrees.



The Lie algebra $\mathfrak{h}$ $=\mathbb{R}^{2}\times \mathbb{R}$ of $\HB$ is equipped with the Lie bracket%
\begin{equation*}
\lbrack (\mathbf{\zeta }_{1},\alpha _{1}),(\mathbf{\zeta }_{2},\alpha
_{2})]=(\mathbf{0},\left\langle \mathbf{\zeta }_{1}, \theta\mathbf{\zeta }%
_{2}\right\rangle ), \hspace{.5cm}\zeta_{i}\in\R^2, \alpha_i\in\R, i=1, 2,\text{.}
\end{equation*}

One of the usefulness of defining the Heisenberg group and its associated algebra as previously, instead of the usual matrix version, is that for this setup the exponential map $\exp :\mathfrak{h}\rightarrow \HB$ is reduced to the identity map on $\HB$. In particular, every connected subgroup $L\subset \HB$ is identified with its Lie subalgebra.

Given a subgroup $L\subset \HB$, we denote by $L_{0}$ the connected component
containing the identity element of $\HB$ and simply call it \emph{the identity
component}, as usual. Note that the identity component $L_{0}$ is a closed
normal subgroup of $L$ and has the same Lie algebra as $L$.\ The other
components are given by the cosets $g\ast L_{0}=L_{0}\ast g$ of $L$.

By our previous setup, it is not hard to see that a typical derivation $\DC$ of the Lie algebra $\mathfrak{%
h}$ of $\HB$ in its matrix form (w.r.t. the standard basis) is given by 
\begin{equation*}
\DC=\left(\begin{array}{cc}
		A & 0\\ \eta^{\top} & \tr A
	\end{array}\right),\hspace{.5cm}\mbox{ where } A\in \mathfrak{gl}(2)\mbox{ and }\eta \in \mathbb{R}^{2}.
\end{equation*}%
Since Lie
algebra derivations are closely connected with Lie algebra automorphisms we
also find it useful to give explicit face of an automorphism by the
following matrix%
\begin{equation*}
\left( 
\begin{array}{cc}
P & 0 \\ 
\eta^{\top}  & \det P%
\end{array}\right) , \hspace{.5cm}\mbox{ where } P\in \mathrm{Gl}(2)\mbox{ and }\eta \in \mathbb{R}^{2}.
\end{equation*}%

\begin{remark}
    In the previous we are assuming that $\R^n=M_{n\times 1}(\R)$. Such identification will be very useful ahead.
\end{remark}

Since the Lie algebra $\mathfrak{h}$ of $\HB$ can be seen as the set of left-invariant
vector fields on $\HB$,  we give below a usual expression of such a vector field
which is notationally appropriate in the present context. Hence, if we pick
a point $g=(\mathbf{v},z)\in \HB$, and an element $B=(\zeta ,\alpha)\in\fh$, the left-invariant vector field on $\HB$ is defined
via the vector space structure by 
\begin{equation*}
B(g)=\left(\zeta ,\alpha +\frac{1}{2}\langle \mathbf{v}, \theta\zeta \rangle\right)\text{.}
\end{equation*}%
It then follows that given a derivation $\DC$ of $\mathfrak{h}$ as above, one
might immediately associate to $\DC$ the linear vector field $\mathcal{X}$ on $\HB$
by $[B,\mathcal{X}]=\DC B$ for every $B\in \mathfrak{h}$. Hence we might write $%
\mathcal{X}$ through the matrix multiplication as follows:%
\begin{equation*}
\mathcal{X}(g)=\left( 
\begin{array}{cc}
A & 0 \\ 
\eta^{\top}  & \mathrm{tr}A%
\end{array}%
\right) \left( 
\begin{array}{c}
\mathbf{v} \\ 
z%
\end{array}%
\right) =\bigg(A\mathbf{v}~,~\langle \eta^{\top} ,\mathbf{v}\rangle +z~\mathrm{tr}A%
\bigg), \hspace{.5cm}g=(\mathbf{v},z)\in \HB.
\end{equation*}%

Let $B\in \mathfrak{gl}(2)$ and let us define the following operator 
\begin{equation*}
\mathbf{\Lambda }^{B}:\mathbb{R}\times \mathbb{R}^{2}\longrightarrow \mathbb{%
R}^{2},\quad \mathbf{\Lambda }_{t}^{B}(\eta )=\int_{0}^{t}e^{s B^{\top}}\eta ds\text{%
.}
\end{equation*}%
It then follows at once that using such an operator we get for $\DC$ that
\begin{equation*}
e^{t \DC}=\left( 
\begin{array}{cc}
e^{tA} & 0 \\ 
\left( e^{t\cdot\mathrm{tr}A}\mathbf{\Lambda }_{t}^{(A-\mathrm{tr}A\cdot I_2)}\eta
\right) ^{\top } & e^{t\cdot \mathrm{tr}A}%
\end{array}%
\right),
\end{equation*}
where in the previous $I_2$ stands for the $2\times 2$ identity matrix. As a consequence, the flow $\varphi _{t}$ induced by $\mathcal{X}$ is given by 
\begin{equation*}
\varphi _{t}(\mathbf{v},z)=\left( e^{tA}\mathbf{v},\left\langle \rme^{t\cdot\mathrm{tr}A}\mathbf{\Lambda }_{t}^{(A-\mathrm{tr}A\cdot I_2)}\eta ,\mathbf{v}\right\rangle
+z\rme^{t~\mathrm{tr}A}\right) \text{.}
\end{equation*}
			
\subsection{Linear control systems on $\HB$}

Before we mention linear control systems on homogeneous spaces of $\HB$ we give first a brief description of a linear control system on $\HB$ since this
is intimately related with that on the corresponding coset spaces of $\HB$. Hence, let $\Omega$ be a compact subset of $\mathbb{R}^3$. By a linear control system (abrev. LCS) on $\HB$ we understand a system of the form 
\begin{equation*}
\Sigma _{\HB}:\quad \dot{(\mathbf{v},z)}=\mathcal{X}(\mathbf{v},z)+\omega_1 B_1(\mathbf{v%
},z)+\omega_2B_2(\mathbf{v%
},z)+\omega_3B_3(\mathbf{v%
},z)
\end{equation*}%
where $\omega=(\omega_1, \omega_2, \omega_3)\in \Omega $, $\mathcal{X}$ a linear vector field and $B_1, B_2, B_3$ left-invariant vector
fields. In coordinates, $\Sigma _{\HB}$ is defined by the family of ODE's as
follows

\begin{equation*}
\Sigma _{\HB}:\quad \left\{ 
\begin{array}{l}
\mathbf{\dot{v}}=A\mathbf{v}+\omega_1\zeta+\omega_2\zeta+\omega_3\zeta  \\ 
\dot{z}=\langle \eta ,\mathbf{v}\rangle +z\mathrm{tr}A+\omega_1\alpha_1+\omega_2\alpha_2+\omega_3\alpha_3+\frac{1}{2}%
\left\langle \mathbf{v},\theta(\omega_1\zeta_1+\omega_2\zeta_2+\omega_3\zeta_3)\right\rangle 
\end{array}%
\right. 
\end{equation*}%
where $\omega\in \Omega $, $g=(\mathbf{v},z)\in \HB$, $B_i=(\zeta_i,\alpha_i )\in 
\mathfrak{h}$,  $A\in \mathfrak{gl}(2)$ and $\eta \in \mathbb{R}^{2}$.

\section{Invariant subgroups of $\HB$ by linear vector fields}

As it is well known, if a subgroup $L$ of $\HB$ is topologically closed then
the homogeneous space $L\setminus \HB$ admits a manifold structure in such way that the canonical projection $\HB\rightarrow L/\HB$ is a submersion. Hence, we start with stating completely in this subsection all possible
closed subgroups of $\HB$. Nonetheless, we exclude the trivial cases where $%
L=\{(0,0)\}$ and $L=\HB$ and only focus on the non trivial cases, namely, when
the subgroup $L$ of $\HB$ is (i) nontrivial discrete, (ii) 1-dimensional and (iii) 2-dimensional. Thus, we give, up to isomorphisms, 1-dimensional and 2-dimensional subalgebras. 

We state the following simple proposition without proof:

\begin{proposition}
\label{prop:1and2dimsubalgebras}Let $\{\mathbf{e}_{1},\mathbf{e}_{2}\}$
denote the canonical basis of $\mathbb{R}^{2}$. Then, up to isomorphisms, it holds that:

\begin{enumerate}
\item There is a unique 2-dimensional Lie subalgebra
which is 
\begin{equation*}
\mathfrak{l}_2=\mathrm{span}\{(\mathbf{e}_{1},0),(\mathbf{0},1)\}\text{.}
\end{equation*}

\item The only 1-dimensional Lie subalgebras are $%
\mathfrak{l}_{0}=\{\mathbf{0}\}\times \mathbb{R}$ or $\mathfrak{l}_{1}=%
\mathbb{R}\mathbf{e}_{1}\times \{0\}$.
\end{enumerate}
\end{proposition}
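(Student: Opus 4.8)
The plan is to classify subalgebras directly from the bracket formula $[(\boldsymbol\zeta_1,\alpha_1),(\boldsymbol\zeta_2,\alpha_2)]=(\mathbf 0,\langle\boldsymbol\zeta_1,\theta\boldsymbol\zeta_2\rangle)$, exploiting that the derived algebra is contained in the center $\{\mathbf 0\}\times\R$ and that the bracket, restricted to the $\R^2$-component, is the standard symplectic (area) form $\omega(\boldsymbol\zeta_1,\boldsymbol\zeta_2)=\langle\boldsymbol\zeta_1,\theta\boldsymbol\zeta_2\rangle$ on $\R^2$. First I would treat the $1$-dimensional case: a line $\fl=\R(\boldsymbol\zeta,\alpha)$ is automatically a subalgebra since $[(\boldsymbol\zeta,\alpha),(\boldsymbol\zeta,\alpha)]=0$, so the only issue is classification up to isomorphism of $\fh$, i.e.\ up to the automorphism group described in Section 3. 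If $\boldsymbol\zeta=\mathbf 0$ we get $\fl_0=\{\mathbf 0\}\times\R$, the center. If $\boldsymbol\zeta\neq\mathbf 0$, I would apply an automorphism of the form $\mathrm{diag}(P,\det P)$ composed with a suitable $\eta$-shear: choosing $P\in\mathrm{Gl}(2)$ with $P\boldsymbol\zeta=\mathbf e_1$ sends $(\boldsymbol\zeta,\alpha)$ to $(\mathbf e_1,(\det P)\alpha)$, and then the shear $(\mathbf v,z)\mapsto(\mathbf v,z+\langle\eta,\mathbf v\rangle)$ with $\langle\eta,\mathbf e_1\rangle=-(\det P)\alpha$ kills the last coordinate, giving $\fl_1=\R\mathbf e_1\times\{0\}$. (One should double-check these shears are genuine Lie algebra automorphisms, which follows from the automorphism matrix displayed in Section 3.) Finally $\fl_0\not\cong\fl_1$ as subalgebras-with-embedding because $\fl_0$ is central in $\fh$ while $\fl_1$ is not, so no automorphism of $\fh$ can carry one to the other.

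For the $2$-dimensional case, let $\fl$ be a $2$-plane in $\fh=\R^2\times\R$. I would split on the position of $\fl$ relative to the center $\fz=\{\mathbf 0\}\times\R$. Since $[\fl,\fl]\subseteq\fz$ and $\dim\fl=2$, $\fl$ cannot be abelian-and-transverse-to-$\fz$ unless the form $\omega$ vanishes on the projection $\pi(\fl)\subseteq\R^2$; but $\omega$ is nondegenerate on $\R^2$, so if $\fz\not\subseteq\fl$ then $\pi$ is injective on $\fl$, hence $\pi(\fl)=\R^2$, and then $\omega|_{\pi(\fl)}\neq 0$ forces $[\fl,\fl]=\fz\subseteq\fl$, a contradiction. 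Therefore every $2$-dimensional subalgebra contains $\fz$, i.e.\ $\fl=\ell\times\R$ for a line $\ell\subseteq\R^2$; any such plane is a subalgebra (the bracket lands in $\fz\subseteq\fl$). Then, exactly as above, an automorphism $\mathrm{diag}(P,\det P)$ with $P\ell=\R\mathbf e_1$ carries $\fl$ to $\mathrm{span}\{(\mathbf e_1,0),(\mathbf 0,1)\}=\fl_2$, proving uniqueness up to isomorphism.

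The only genuinely delicate point — the "main obstacle" — is the dichotomy argument in the $2$-dimensional case, namely ruling out a $2$-plane transverse to the center: one must argue carefully that transversality would make $\pi|_\fl$ an isomorphism onto $\R^2$, on which the symplectic form is nonzero, producing a nonzero bracket that escapes $\fl$. Everything else is routine linear algebra together with bookkeeping to confirm that the maps used ($P$-conjugations and $\eta$-shears) are indeed automorphisms of $\fh$ in the sense recorded in Section 3; the proposition is stated without proof in the paper precisely because these verifications, while necessary, are mechanical.
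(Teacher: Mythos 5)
Your proof is correct. Note that the paper offers no proof at all for this proposition (it is explicitly ``stated without proof''), so there is nothing to compare against; your argument --- the derived algebra lies in the center $\{\mathbf{0}\}\times\R$, the bracket factors through the nondegenerate area form $\langle\zeta_1,\theta\zeta_2\rangle$ on $\R^2$ (which forces every $2$-dimensional subalgebra to contain the center), and the maps $\left(\begin{smallmatrix} P & 0\\ \eta^{\top} & \det P\end{smallmatrix}\right)$ act transitively on the remaining data --- is exactly the standard way to fill that gap, and all the steps check out.
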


Since the exponential map $%
\exp :\mathfrak{h}\rightarrow \HB$ is the identity map (and hence, a global
diffeomorphism) it follows from the previous proposition that, up to isomorphisms, any subgroup $L\subset\HB$ has identity component given by $L_0=\fl_2$, if the $L$ is two-dimensional, or the dimension of $L$ is equal to 1 and  $L_0=\mathfrak{l}_{0}$ or  $L_0=\mathfrak{l}_{1}$.

The proposition below together with the Proposition \ref%
{prop:1and2dimsubalgebras} help to construct homogeneous spaces we need for
later references.

\begin{proposition}\label{MainPro1}
			Up to isomorphisms, any closed subgroup $L\subset \HB$ is given by:
			\begin{enumerate}
				\item $\dim L=2$ and $L=(\R\times \Z p)\times\R, p=0,1$;
				\item $\dim L=1$ and $L=\Z^k\times\R, \;\mbox{ for }k=0, 1, 2$ or $L=\R\mathbf{e}_1\times\Z p, \;\mbox{ for }p=0, 1$;
				\item $\dim L=0$ and $L=\Z\mathbf{e}_1\times\Z p$ for $p=0, 1$ , $L=\{\mathbf{0}\}\times \Z$ or $L=\Z^2\times\Z \frac{1}{p}$ for $p\in\N$.
			\end{enumerate}
		\end{proposition}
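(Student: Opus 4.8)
The plan is to work through the three cases by dimension, in each case starting from the classification of identity components in Proposition \ref{prop:1and2dimsubalgebras} and then analyzing which discrete extensions produce a \emph{closed} subgroup, finally reducing the resulting list up to isomorphism (i.e.\ up to conjugation by an automorphism of $\HB$, whose explicit matrix form was given in Section 3). First I would treat $\dim L=2$: here $L_0=\fl_2=\R\mathbf e_1\times\R$ (as a subset of $\HB=\R^2\times\R$, using $\exp=\id$), and since $L_0$ is normal in $L$ the quotient $L/L_0$ is a discrete subgroup of the $1$-dimensional group $\HB/L_0\cong\R$; a discrete subgroup of $\R$ is either $\{0\}$ or $\Z p$ for some $p>0$, and after rescaling the $\mathbf e_2$-direction by an automorphism we may take $p\in\{0,1\}$. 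Checking that $(\R\times\Z p)\times\R$ is indeed a subgroup (closure under $\ast$) is a direct computation from the product formula, since the cocycle $\tfrac12\langle\mathbf v_1,\theta\mathbf v_2\rangle$ lies in the $\R$-factor which is entirely contained in $L$; this gives item (1).

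For $\dim L=1$ there are two sub-cases according to whether $L_0=\fl_0=\{\mathbf 0\}\times\R$ (the center) or $L_0=\fl_1=\R\mathbf e_1\times\{0\}$. When $L_0=\fl_0$: $\HB/L_0\cong\R^2$ is abelian, so $L/L_0$ is a discrete subgroup of $\R^2$, hence isomorphic to $\Z^k$ for $k=0,1,2$; pulling back and using an automorphism to normalize the lattice yields $L=\Z^k\times\R$. When $L_0=\fl_1$: $\HB/L_0$ is $2$-dimensional with coordinates coming from the $\mathbf e_2$- and $z$-directions, and one must check which rank-one discrete extensions stay closed; normalizing by an automorphism gives $L=\R\mathbf e_1\times\Z p$ with $p\in\{0,1\}$. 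Here the only mild subtlety is that not every abstract group extension is realized by a closed subgroup — one must verify the candidate set is literally closed under $\ast$ and topologically closed, but with $L_0$ containing a full coordinate axis these checks are again immediate from the product formula.

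The genuinely delicate case is $\dim L=0$, i.e.\ $L$ discrete (and, per the paper's standing assumption, non-normal, which rules out subgroups of the center beyond the listed $\{\mathbf 0\}\times\Z$). A discrete subgroup of $\HB$ need not be a lattice, so I would argue via the projection $\pi:\HB\to\R^2$ (quotient by the center): $\pi(L)$ is a finitely generated subgroup of $\R^2$, and $L\cap(\{\mathbf 0\}\times\R)$ is a discrete subgroup of $\R$. The main obstacle is the interaction between these two pieces through the symplectic cocycle: if $\pi(L)$ has rank $2$, say generated by $\mathbf u_1,\mathbf u_2$, then the commutator of lifts of $\mathbf u_1,\mathbf u_2$ lands in the center at height $\langle\mathbf u_1,\theta\mathbf u_2\rangle\neq 0$, which forces $L\cap(\{\mathbf 0\}\times\R)$ to be nontrivial and in fact to contain that value; this is exactly what produces the constraint $\Z^2\times\Z\tfrac1p$ with $p\in\N$ (the center-part is finer than the ``area'' generated by the lattice in $\R^2$, and normalizing by automorphisms — which act on the cocycle by $\det P$ — reduces the two area-generators to a single integer reciprocal $\tfrac1p$). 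If $\pi(L)$ has rank $0$ or $1$ the cocycle contributes nothing new and one gets $\{\mathbf 0\}\times\Z$ or $\Z\mathbf e_1\times\Z p$ with $p\in\{0,1\}$ after normalization. I would organize this as: (a) show discreteness of $L$ forces discreteness of $\pi(L)$ and of $L\cap Z(\HB)$; (b) split on $\mathrm{rank}\,\pi(L)\in\{0,1,2\}$; (c) in the rank-$2$ case compute the commutator to pin down the center component; (d) apply an automorphism from the list in Section 3 to bring each case to the stated normal form, checking that the normalization exhausts the isomorphism classes. Step (c)–(d) is where essentially all the content lies; the rest is bookkeeping with the product and bracket formulas already displayed in Section 3.
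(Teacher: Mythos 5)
Your overall strategy coincides with the paper's: split by $\dim L$, fix the identity component via Proposition \ref{prop:1and2dimsubalgebras}, project to a suitable quotient to see a discrete subgroup, and normalize by an automorphism of the displayed matrix form. The $\dim L=2$ case, the $L_0=\{\mathbf 0\}\times\R$ sub-case of $\dim L=1$, and the discrete case (including the commutator/cocycle argument forcing $ab\in c\Z$ and hence the $\Z^2\times\Z\frac{1}{p}$ normal form) are argued exactly as in the paper; if anything you are more explicit than the paper about why discreteness of $L$ forces discreteness of $\pi(L)$ when $L$ does not contain $\ker\pi$.

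The one genuine gap is the sub-case $\dim L=1$, $L_0=\R\mathbf e_1\times\{0\}$. You treat it as a routine matter of checking ``which rank-one discrete extensions stay closed'' and assert the checks are ``immediate from the product formula'' because $L_0$ contains a coordinate axis. That is true when $L_0$ is the center (your other sub-case), but here $L_0$ is not normal in $\HB$, $L_0\setminus\HB$ is not a group, and there is no a priori reason the candidate extension directions are only $\mathbf e_2$ and $z$ with the $\mathbf e_2$-direction merely ``failing to be closed''. The actual content --- and the bulk of the paper's proof of item 2 --- is to show first that every $(\mathbf v,z)\in L$ satisfies $\langle\mathbf v,\mathbf e_2\rangle=0$: if not, the paper computes that $(g\ast L_0)^2$ is a two-dimensional plane contained in $L$ (equivalently, commutating $g$ against $L_0$ sweeps out the whole center), contradicting $\dim L=1$. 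Only after this does one get $L\subset\R\mathbf e_1\times\R$, split off $(\{\mathbf 0\}\times\R)\cap L$ as a discrete central factor $\{\mathbf 0\}\times\Z a$, and normalize to $\R\mathbf e_1\times\Z p$. Executing your closure check carefully would surface this, but as written your plan omits the dimension/commutator obstruction that is the crux of this case, and mislabels it as bookkeeping.
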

		
		\begin{proof}
1. Let $L\subset \HB$ be a closed subgroup with $\dim L=2$ and assume w.l.o.g. that $L_0=\R \mathbf{e}_{1}\times \R$. The projection $$\pi:\mathbb{H}\rightarrow \R, \hspace{1cm}\pi(\mathbf{v},z)=y,$$ 
where $\mathbf{v}=(x, y)$, is a group homomorphism with kernel given exactly by $\R \mathbf{e}_{1}\times \R$. Hence we obtain that $L_0 \setminus\HB = \R $ by the isomorphism Theorem. In particular, $\pi:\mathbb{H}\rightarrow\R$ takes $L$ into a discrete subgroup of $\R$ and hence 
$\pi(L)=\Z a$, for some $a\geq 0$. Therefore, 
$$L\subset \pi^{-1}(\Z a)=(\R\times\Z a)\times\R.$$
On the other hand, for any $g\in (\R\times\Z a)\times\R$ there exists $g_0\in L$ such that $\pi(g)=\pi(g_0)$. Consequently,
$$g*g_0^{-1}\in \R\mathbf{e}_{1}\times \R=L_0\hspace{.5cm}\implies\hspace{.5cm}w\in L_0 \ast g_0\subset L\hspace{.5cm}\implies\hspace{.5cm}L=(\R\times\Z a)\times\R.$$

If $a=0$ the item is proved. If $a\neq 0$, the map
$$\phi:\HB\rightarrow\HB, \hspace{1cm}\phi((x, y), z)=((x, a^{-1}y), z),$$
is an automorphism taking $L$ to $(\R\times\Z)\times\R$, concluding the prove.


2. Let us first assume that $L_{0}=\{\mathbf{0}\}\times \mathbb{R}$. In this case, the homogeneous space $L_{0}\setminus \HB$ coincides with the Lie group $\mathbb{R%
}^{2}$ with canonical projection given by 
$$\pi :\HB\rightarrow \mathbb{R}^{2}, \hspace{1cm}\pi(\mathbf{v}, z)=\mathbf{v}.$$ 

As previously, $\pi$ takes $L$
into a discrete subgroup of $\mathbb{R}^{2}$ implying that
$$\pi(L)=a\Z\times b\Z, \hspace{.5cm}\mbox{ for some } \hspace{.5cm} a, b\geq 0.$$
Thus, we conclude that $\pi (L)=\pi \left( 
a\Z\times b\Z\times \{0\}\right)=a\Z\times b\Z$ and hence,
				$$a\Z\times b\Z\times \{0\}\subset L*(\{\mathbf{0}\}\times\R)=L\;\;\;\mbox{ and }\;\;\; L\subset (a\Z\times b\Z\times \{0\})*(\{\mathbf{0}\}\times\R)=a\Z\times b\Z\times\R,$$
			   where the former equality follows from the fact that $Z(\HB)=\{\mathbf{0}%
\}\times \mathbb{R}$ is the center of $\HB$. As in the previous item, one can easily construct an isomorphism of $\HB$ taking $a\Z\times b\Z\times\R$ onto $\Z^k\times\R$, where $k$ depends on the numbers $a$ and $b$. Therefore the equality desired follows.

 Now assume that $L_{0}=\mathbb{R}\mathbf{e}_{1}\times
\{0\}$. If $g=(\mathbf{v},z)\in L$ then it follows that 
\begin{equation*}
g \ast L_{0}=\left\{ \left( \mathbf{v}+t\mathbf{e}_{1},z+\frac{t}{2}\left\langle 
\mathbf{v},\mathbf{e}_{2}\right\rangle \right) :t\in \mathbb{R}\right\} 
\end{equation*}%
is a line passing through the point $g=(\mathbf{v},z)$ and parallel to the
vector $\left( 2\mathbf{e}_{1},\left\langle \mathbf{v},\mathbf{e}%
_{2}\right\rangle \right) $. On the other hand,
\begin{align*}
& \bigg(\mathbf{v}+t\mathbf{e}_{1},z+\frac{t}{2}\left\langle \mathbf{v},%
\mathbf{e}_{2}\right\rangle \bigg)\ast \bigg(\mathbf{v}+s\mathbf{e}_{1},z+%
\frac{s}{2}\left\langle \mathbf{v},\mathbf{e}_{2}\right\rangle \bigg) \\
& =\left( 2\mathbf{v}+(t+s)\mathbf{e}_{1},2z+\frac{t+s}{2}\left\langle 
\mathbf{v},\mathbf{e}_{2}\right\rangle +\frac{1}{2}\left\langle \mathbf{v}+t%
\mathbf{e}_{1},\left( \mathbf{v}+s\mathbf{e}_{1}\right) ^{\ast
}\right\rangle \right)  \\
& =\left( 2\mathbf{v}+(t+s)\mathbf{e}_{1},2z+\frac{t+s}{2}\left\langle 
\mathbf{v},\mathbf{e}_{2}\right\rangle +\frac{1}{2}\left( s\left\langle 
\mathbf{v},\mathbf{e}_{2}\right\rangle -t\left\langle \mathbf{v},\mathbf{e}%
_{2}\right\rangle \right) \right)  \\
& =\big(2\mathbf{v}+(t+s)\mathbf{e}_{1},2z+s\left\langle \mathbf{v},\mathbf{e%
}_{2}\right\rangle \big) \\
& =2(\mathbf{v},z)+t\left( \mathbf{e}_{1},0\right) +s\left( \mathbf{e}%
_{1},\left\langle \mathbf{v},\mathbf{e}_{2}\right\rangle \right) 
\end{align*}%
which shows that 
\begin{equation*}
\left( g \ast L_{0}\right) ^{2}=\left\{ 2(\mathbf{v},z)+t\left( \mathbf{e}%
_{1},0\right) +s\left( \mathbf{e}_{1},\left\langle \mathbf{v},\mathbf{e}%
_{2}\right\rangle \right) :t,s\in \mathbb{R}\right\} \text{.}
\end{equation*}%
Note that $\left( g \ast L_{0}\right) ^{2}$ is a plane if $\left\langle \mathbf{v},%
\mathbf{e}_{2}\right\rangle \neq 0$. Since $L$ is a one dimensional subgroup, the fact that $(g \ast L_0)^2\subset L$ implies that 
$$(\mathbf{v},z)\in L \hspace{.5cm}\iff \hspace{.5cm}\left\langle \mathbf{v},\mathbf{e}%
_{2}\right\rangle =0 \hspace{.5cm}\iff \hspace{.5cm}\mathbf{v}\in \mathbb{R}%
\mathbf{e}_{1},$$
showing that $L\subset \mathbb{R}\mathbf{e}_{1}\times \mathbb{%
R}$. On the other hand, since any $(\mathbf{v},z)\in \HB$ can be written as $(\mathbf{v}%
,z)=(\mathbf{v},0)\ast (\mathbf{0},z)$, then 
$$(\mathbf{v},z)\in L \hspace{.5cm}\implies \hspace{.5cm} (\mathbf{0},z)=(\mathbf{v},0)^{-1}*(\mathbf{v},z)\in L\hspace{.5cm}\implies \hspace{.5cm}(\mathbf{0},z)\in (\{\mathbf{0}\}\times 
\mathbb{R)}\cap L,$$
and this shows that 
\begin{equation}
L=\mathbb{R}\mathbf{e}_{1}\times \{0\}\ast ((\{\mathbf{0}\}\times \mathbb{R}%
)\cap L)\text{.}  \label{eq:subgroup 0 times R intsec L}
\end{equation}%
However, $(\{\mathbf{0}\}\times \mathbb{R})\cap L$ in (\ref{eq:subgroup 0
times R intsec L}) is a discrete subgroup of the Lie group $\{\mathbf{0}%
\}\times \mathbb{R}$ and hence it happens that 
\begin{equation*}
(\{\mathbf{0}\}\times \mathbb{R})\cap L=\{\mathbf{0}\}\times \mathbb{Z} a, \mbox{ for some } a\geq 0.
\end{equation*}
Again, by the fact that $\{\mathbf{0}\}\times \mathbb{R}$ is
the center of $\HB$ we conclude that 
\begin{equation*}
L=\mathbb{R}\mathbf{e}_{1}\times \{0\}\ast \left( \{\mathbf{0}\}\times \mathbb{Z} a
\right) =\mathbb{R}\mathbf{e}_{1}\times \mathbb{Z} a\text{.}
\end{equation*}	
If $a=0$ we get that $L=\R\mathbf{e}_1\times\{0\}$ and for $a\neq 0$, the isomorphism 
$$(\mathbf{v}, z)\mapsto \left(\frac{1}{\sqrt{a}}\mathbf{v}, \frac{1}{a}z\right),$$
takes $L$ to the subgroup $\R\mathbf{e}_1\times\Z$ as stated.

3. As in the preceding case, by considering the subgroup $\{\mathbf{0}\}\times \mathbb{R}$ we have that $\pi (L)$ is a discrete subgroup of $(\{\mathbf{0}\}\times \mathbb{R})\setminus \HB= \R^2$ and hence we obtain that 
$$L\subset a\Z\times b\Z\times \mathbb{R}, \mbox{ for some }a,b\geq 0.$$ 
Moreover,
$$(\mathbf{v}, z)=(\mathbf{v},0)\ast (\mathbf{0},z)\hspace{.5cm}\implies\hspace{.5cm}(\mathbf{0}, z)\in (\{\mathbf{0}\}\times\R)\cap L,$$ 
which, as previously, allow us to conclude that 
$$L=a\Z\times b\Z\times c\Z, \mbox{ where }a, b, c\geq  0.$$
If $a=b=0$ then $c>0$ and the automorphism
$$(\mathbf{v}, z)\mapsto \left(\frac{1}{\sqrt{c}}\mathbf{v}, \frac{1}{c}z\right),$$
takes $L$ to the subgroup $\{\mathbf{0}\}\times\Z$. Analogously, if $a=0$ and $b\neq 0$ or $b=0$ and $a\neq 0$ and $L$ is isomorphic to $\Z\mathbf{e}_1\times\Z p$ for $p=0, 1$. On the other hand, if $ab\neq 0$, then
$$(a\mathbf{e}_1, 0), (b\mathbf{e}_2, 0)\in L\hspace{.5cm}\implies\hspace{.5cm} \left(a\mathbf{e}_1+b\mathbf{e}_2, -\frac{ab}{2}\right)\in L\hspace{.5cm}\implies\hspace{.5cm} ab\in c\Z,$$
and hence, $L$ is isomorphic to $\Z^2\times \Z \frac{1}{p}$ by the isomorphism
$$((x, y), z)\mapsto\left(\left(\frac{1}{a}x, \frac{1}{b}y\right), \frac{1}{ab}z\right), \hspace{.5cm}\mbox{ where }p=\frac{ab}{c}$$ 
for some $p\in\N$, concluding the proof.
\end{proof}

\begin{remark}
\label{NormalS} An elemantary calculation shows that 
\begin{equation*}
\left( \mathbf{v}_{1},z_{1}\right) \ast \left( \mathbf{v}_{2},z_{2}\right)
\ast \left( \mathbf{v}_{1},z_{1}\right) ^{-1}=\left( \mathbf{v}%
_{2},z_{2}+\left\langle \mathbf{v}_{1},\theta\mathbf{v}_{2}\right\rangle
\right) 
\end{equation*}%
and hence, up to isomorphisms, the only normal subgroups of $\HB$ are

(i) $(\mathbb{R}\times \mathbb{Z}p)\times \mathbb{R}$ for $p=0,1$

(ii) $\mathbb{Z}^{k}\times \mathbb{R}$ for $k=0,1,2$ and

(iii) $\{\mathbf{0}\}\times \mathbb{Z}$.
\end{remark}
Following \cite[Proposition 4]{Jouan}, if $L\subset \HB$ is a closed subgrouop, then a linear vector field $\XC$ is conjugated to a vector field on the homogeneous space $L\setminus\HB$ if and only if $L$ is invariant by the flow of $\XC$. Therefore, our next step is to obtain conditions for a 1-parameter subgroup of automorphisms $\left\{ \varphi _{t}\right\}
_{t\in \mathbb{R}}\subset \mathrm{Aut}(\HB)$ to let a closed sugbroup of $\HB$ invariant. 

Moreover, we will not take into account the normal subgroups of $\HB$ mentioned in Remark \ref{NormalS} since otherwise the
corresponding homogeneous spaces become Lie groups and the LCSs on such
spaces has been already studied in a series of papers. See, \cite{Ayala}, 
\cite{VA1},\cite{VA2},\cite{VA3},\cite{VAP} for detailed exposition.

\begin{proposition}\label{MainPro}
    	Let $\XC=(\eta, A)$ be a linear vector field on $\HB$ with associated flow $\{\varphi_t\}_{t\in\R}$. It holds:
    	\begin{enumerate}
    		\item $\Z^2\times\Z \frac{1}{p}, p\in\N$ is $\varphi_t$-invariant if and only if $\DC\equiv 0$;
    		
    		\item $\Z\cdot \mathbf{e}_{1}\times \{0\}$ is $\varphi_t$-invariant if and only if 
    		$$A\mathbf{e}_{1}=0, \;\;A\mathbf{e}_{2}=\beta \mathbf{e}_{2}+\alpha \mathbf{e}_{1}\;\;\mbox{ and }\;\;\eta\in\R \mathbf{e}_{2}, \;\mbox{ with }\;\alpha=0 \;\mbox{ if }\;\eta\neq 0;$$ 
    		
    		\item $\Z\cdot \mathbf{e}_{1}\times \Z$ is $\varphi_t$-invariant if and only 
    		$$A \mathbf{e}_{1}=0, \;\;A\mathbf{e}_{2}=\alpha \mathbf{e}_{1}\;\;\mbox{ and }\;\;\eta\in\R \mathbf{e}_{2}, \;\mbox{ with }\;\alpha=0 \;\mbox{ if }\;\eta\neq 0;$$
    		\item $\R\cdot \mathbf{e}_{1}\times\{0\},$ is $\varphi_t$-invariant if and only if 
    		$$A \mathbf{e}_{1}=\lambda \mathbf{e}_{1}, \;\;A\mathbf{e}_{2}=\beta \mathbf{e}_{2}+\alpha \mathbf{e}_{1}\;\;\mbox{ and }\;\;\eta\in\R \mathbf{e}_{2}, \;\mbox{ with }\;\alpha=0 \;\mbox{ if }\;\eta\neq 0;$$
    		
    		\item $\R\cdot \mathbf{e}_{1}\times\Z$ is $\varphi_t$-invariant if and only if 
    		$$A \mathbf{e}_{1}=\lambda \mathbf{e}_{1}, \;\;A\mathbf{e}_{2}=-\lambda \mathbf{e}_{2}+\alpha \mathbf{e}_{1}\;\;\mbox{ and }\;\;\eta\in\R \mathbf{e}_{2}, \;\mbox{ with }\;\alpha=0 \;\mbox{ if }\;\eta\neq 0;$$ 
    		
\end{enumerate}

    \end{proposition}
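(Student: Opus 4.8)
The plan is to exploit that $\exp\colon\fh\to\HB$ is the identity and $\HB$ is simply connected, so every automorphism of $\HB$ is a linear map of $\R^{3}$ and the flow is simply $\varphi_{t}=e^{t\DC}$ acting linearly on $\R^{3}=\HB$. Hence ``$L$ is $\varphi_{t}$-invariant'' is equivalent to ``$\varphi_{t}(L)\subseteq L$ for all $t\in\R$'' (apply $\varphi_{-t}$ for the reverse inclusion), and, since $\varphi_{t}$ is a group automorphism, to ``$\varphi_{t}$ sends a fixed generating set of $L$ into $L$''. For the subgroups in the statement I would take as generating set the identity component $L_{0}$ (one of $\{0\}$ or $\fl_{1}=\R\mathbf{e}_{1}\times\{0\}$) together with coset representatives among $(\mathbf{e}_{1},0)$, $(\mathbf{e}_{2},0)$, $(\mathbf{0},1)$, $(\mathbf{0},\tfrac1p)$.

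First I would treat $L_{0}$. Since $\varphi_{t}(L_{0})$ is connected and contains the identity it lies in $L_{0}$, and ``$e^{t\DC}\fl_{0}\subseteq\fl_{0}$ for all $t$'' is equivalent (differentiate at $t=0$, then exponentiate back) to $\DC\fl_{0}\subseteq\fl_{0}$; using $\DC(\zeta,\alpha)=(A\zeta,\langle\eta,\zeta\rangle+\alpha\tr A)$, invariance of $\fl_{1}$ reads exactly $A\mathbf{e}_{1}\in\R\mathbf{e}_{1}$ (so $A\mathbf{e}_{1}=\lambda\mathbf{e}_{1}$) and $\langle\eta,\mathbf{e}_{1}\rangle=0$ (so $\eta\in\R\mathbf{e}_{2}$), which is the $L_{0}$-part of items 4 and 5. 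For a discrete $L$ I would invoke that a continuous path inside a discrete set is constant, so $\varphi_{t}$ must fix each generator pointwise: on $(\mathbf{e}_{1},0)$, differentiating ``the first column of $e^{t\DC}$ is constant'' at $t=0$ forces the first column of $\DC$ to be zero, i.e.\ $A\mathbf{e}_{1}=0$ and $\eta\in\R\mathbf{e}_{2}$ (and likewise on $(\mathbf{e}_{2},0)$ in item 1); on $(\mathbf{0},1)$ one has $\varphi_{t}(\mathbf{0},1)=(\mathbf{0},e^{t\tr A})$, whence $e^{t\tr A}\in\Z$ for all $t$ and $\tr A=0$ (similarly with $(\mathbf{0},\tfrac1p)$ in item 1). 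Putting these together: item 1 forces every column of $\DC$ to vanish, i.e.\ $\DC\equiv0$; item 2 forces $A\mathbf{e}_{1}=0$, $\eta\in\R\mathbf{e}_{2}$, and one writes $A\mathbf{e}_{2}=\beta\mathbf{e}_{2}+\alpha\mathbf{e}_{1}$; item 3 adds $\tr A=0$, hence $\beta=0$; items 4 and 5 combine the $\fl_{1}$-invariance above with, for item 5, $\tr A=0$, i.e.\ $A\mathbf{e}_{2}=-\lambda\mathbf{e}_{2}+\alpha\mathbf{e}_{1}$. The converse (sufficiency) in each case I would check directly by substituting these normal forms into the explicit flow $\varphi_{t}(\mathbf{v},z)=\bigl(e^{tA}\mathbf{v},\ \langle e^{t\tr A}\mathbf{\Lambda}_{t}^{(A-\tr A\cdot I_{2})}\eta,\mathbf{v}\rangle+z\,e^{t\tr A}\bigr)$ and verifying membership in $L$; the point is that $(A-\tr A\cdot I_{2})\mathbf{e}_{1}$ is a scalar multiple of $\mathbf{e}_{1}$, which makes the relevant integral elementary.

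The step I expect to be the main obstacle is the clause ``$\alpha=0$ whenever $\eta\neq0$'': it is not immediately produced by the $L_{0}$-invariance or by the discrete-generator conditions above, so I would examine it most closely, looking for it either in a finer analysis of the $z$-coordinate of $\varphi_{t}$ (tracking precisely how $\mathbf{\Lambda}_{t}^{(A-\tr A\cdot I_{2})}\eta$ couples the eigendirection $\mathbf{e}_{1}$ of $A$ with the entry $\alpha$ once $\eta\in\R\mathbf{e}_{2}\setminus\{0\}$) or, more likely, as a normalization of the derivation obtained by conjugating $\DC$ with an automorphism of $\HB$ preserving the subgroup in question, in the ``up to isomorphism'' spirit of Propositions \ref{prop:1and2dimsubalgebras} and \ref{MainPro1}. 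I would also be careful with the borderline cases $\tr A=0$ versus $\tr A\neq0$ and $\eta=0$ versus $\eta\neq0$, where the integrals defining $\mathbf{\Lambda}_{t}$ take different forms.
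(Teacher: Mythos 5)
Your overall strategy is the same as the paper's: reduce $\varphi_t$-invariance of $L$ to conditions on a generating set, use connectedness to handle $L_0$ and discreteness to force $t\mapsto\varphi_t(g)$ to be constant on discrete generators, then differentiate at $t=0$. Everything you obtain this way ($A\mathbf{e}_1=0$ or $A\mathbf{e}_1=\lambda\mathbf{e}_1$, $\langle\eta,\mathbf{e}_1\rangle=0$, $\tr A=0$ from the central generator, hence $\DC\equiv0$ in item 1) coincides with the paper's derivation. The genuine gap is the one you flagged yourself: the clause ``$\alpha=0$ if $\eta\neq0$'' is never established, and of your two fallback guesses the second is not viable in principle. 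The proposition is an ``if and only if'' about a \emph{fixed} linear vector field and a \emph{fixed} subgroup; conjugating $\DC$ by an automorphism of $\HB$ replaces $\XC$ by a different linear vector field (and in general moves the subgroup), so no normalization ``up to isomorphism'' can produce a necessary condition here. The paper's route is your first guess: the $z$-coordinate condition is $\langle\mathbf{\Lambda}_t^{A-\tr A\cdot I_2}\eta,\mathbf{e}_1\rangle\equiv 0$ (identically zero in items 2 and 4; integer-valued and hence zero by continuity in items 3 and 5), and the paper differentiates it \emph{twice} at $t=0$: the first derivative gives $\langle\eta,\mathbf{e}_1\rangle=0$, and the second is read as $\langle A\eta,\mathbf{e}_1\rangle=0$, which with $\eta=\gamma\mathbf{e}_2\neq 0$ and $A\mathbf{e}_2=\alpha\mathbf{e}_1+\beta\mathbf{e}_2$ forces $\alpha=0$. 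So the idea missing from your write-up is simply: do not stop at the first derivative of the $z$-coordinate condition.

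That said, you should carry out this second-derivative step yourself rather than take it on faith, because it is sensitive to the transpose in the definition $\mathbf{\Lambda}_t^{B}\eta=\int_0^t e^{sB^\top}\eta\,ds$. With that definition the second derivative at $t=0$ is $\langle B^\top\eta,\mathbf{e}_1\rangle=\langle\eta,B\mathbf{e}_1\rangle$ with $B=A-\tr A\cdot I_2$, and since $B\mathbf{e}_1\in\R\mathbf{e}_1$ and $\langle\eta,\mathbf{e}_1\rangle=0$ this vanishes for every $\alpha$. Indeed your own sufficiency check already shows this: once $\mathbf{e}_1$ is an eigenvector of $A$ and $\eta\in\R\mathbf{e}_2$, the integrand $\langle\eta,e^{sB}\mathbf{e}_1\rangle$ is a scalar multiple of $\langle\eta,\mathbf{e}_1\rangle=0$, so invariance holds with no constraint tying $\alpha$ to $\eta$ (for instance $A\mathbf{e}_1=0$, $A\mathbf{e}_2=\mathbf{e}_1$, $\eta=\mathbf{e}_2$ gives $e^{t\DC}=I+t\DC$, which fixes $\R\mathbf{e}_1\times\{0\}$ pointwise). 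Your instinct that the clause ``is not immediately produced'' is therefore correct, and you must resolve this discrepancy explicitly — it propagates into Sections 5 and 6, where ``$\alpha=0$ if $\gamma\neq0$'' is used to control the roots of $q(s)=\frac12\alpha s^2+\gamma s$. As written, your proposal does not prove the ``only if'' direction of items 2--5 in the form stated.
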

  \begin{proof}
1.~ Pick a point $(\mathbf{v},z)\in \mathbb{Z}^{2}\times \mathbb{Z}\frac{1}{p},$ where $p\in \N
$ and assume that 
\begin{equation*}
\varphi _{t}(\mathbf{v},z)=\bigg(e^{tA}\mathbf{v}~,~\langle e^{t\cdot\mathrm{tr}A}%
\mathbf{\Lambda }_{t}^{A-\mathrm{tr}A\cdot I_2}\eta ,\mathbf{v}\rangle +ze^{t\cdot\mathrm{%
tr}A}\bigg)\in \mathbb{Z}^{2}\times \mathbb{Z}\frac{1}{p}
\end{equation*}%
Then the following equations are obtained 
\begin{equation}
\begin{cases}
e^{tA}\mathbf{v}\in \mathbb{Z}^{2} \\ 
\langle e^{t\cdot\mathrm{tr}A}\mathbf{\Lambda }_{t}^{A-\mathrm{tr}A\cdot I_2}\eta ,\mathbf{%
v}\rangle +ze^{t\cdot\mathrm{tr}A}\in \mathbb{Z}\frac{1}{p}\text{.}%
\end{cases}
\label{Eq:mainpro1}
\end{equation}%
It results from the first equation of (\ref{Eq:mainpro1}) $A\equiv 0$ and $\eta $ is
orthogonal to the any vector $\mathbf{v}\in\Z^2$, implying that $\eta =0$. Thus, $\DC=0$.\newline
\vspace{3mm} 2. Now, $\mathbb{Z}\mathbf{e}_1\times
\{0\}$ is $\varphi_t$ invariant if and only if it holds that
\begin{equation}
\begin{cases}
e^{tA}\mathbf{e}_1\in \mathbb{Z}\mathbf{e}_1 \\ 
\langle \mathbf{\Lambda }_{t}^{A-\mathrm{tr}A\cdot I_2}\eta ,\mathbf{%
e}_1\rangle=0.%
\end{cases}
\label{Eq:mainpro2}
\end{equation}%
The first equation of (\ref{Eq:mainpro2}) implies that $t\mapsto e^{tA}\mathbf{e}_{1}$ must be constant since it is
a  continuous curve and $\mathbb{Z}$ is a discrete subgroup. Now, if we take
its derivative at $t=0$ we immediately get $A\mathbf{e}_{1}=0$. 

On the other hand, from the second equation, we get that
$$0=\frac{d}{dt}_{|t=0}\langle \mathbf{\Lambda }_{t}^{A-\mathrm{tr}A\cdot I_2}\eta ,\mathbf{%
e}_1\rangle=\langle \rme^{t\cdot(A-\mathrm{tr}A\cdot I_2)}\eta ,\mathbf{%
e}_1\rangle|_{t=0}=\langle \eta, \mathbf{e}_1\rangle,$$
from which we conclude that the matrix $A$ satisfies 
$$A\mathbf{e}%
_{1}=0,\hspace{.5cm}A\mathbf{e}_{2}=\beta \mathbf{e}_{2}+\alpha \mathbf{e}_{1}\hspace{.5cm}\text{ and 
}\hspace{.5cm}\eta \in \mathbb{R}\mathbf{e}_{2}~\text{ with }~\alpha =0~\text{ if }~\eta
\neq 0,$$
as stated.

\vspace{3mm} 3. As previously, $\mathbb{Z}\mathbf{e}_1 \times 
\mathbb{Z}$ is $\varphi_t$-invariant if and only if, for all $n, m\in\Z$, 
\begin{equation}
\begin{cases}
e^{tA}\mathbf{e}_1\in \mathbb{Z}\mathbf{e}_1 \\ 
n\langle e^{t\cdot\mathrm{tr}A}\mathbf{\Lambda }_{t}^{A-\mathrm{tr}A\cdot I_2}\eta ,\mathbf{%
e}_1\rangle +m\rme^{t\cdot\mathrm{tr}A}\in \mathbb{Z}\text{.}%
\end{cases}
\label{eq:mainpro25}
\end{equation}%
If we choose $n=0$ and $m=1$, we get from the second equation in (\ref{eq:mainpro25}) that $e^{t\cdot \mathrm{%
tr}A}\in \mathbb{Z}$ for all $t\in \mathbb{R}$ which results $\mathrm{tr}A=0$. Similarly, if we select $n=1$ and $m=0$ then%
\begin{equation*}
\rme^{tA}\mathbf{e}_{1}\in \Z\mathbf{e}_1 \hspace{.5cm}\mbox{ and }\hspace{.5cm}\langle \mathbf{\Lambda }%
_{t}^{A}\eta ,\mathbf{e}_{1}\rangle \in \mathbb{Z},
\end{equation*}%
from which we get, like in the preceding item, that $A\mathbf{e}_1=0$ and $\eta \in \mathbb{R}\mathbf{e}_{2}$. Since $\tr A=0$ we get already that 
$$A\mathbf{e}_{1}=\mathbf{0}\hspace{.5cm}A\mathbf{e}_{2}=\alpha \mathbf{e}_{1}\hspace{.5cm}\mbox{ and }\hspace{.5cm}\eta\in\R\mathbf{e}_2.$$
Now, 
$$0=\frac{d^2}{dt^2}_{|t=0}\langle \mathbf{\Lambda }%
_{t}^{A}\eta ,\mathbf{e}_{1}\rangle=\langle A\rme^{tA}\eta, \mathbf{e}_1\rangle |_{t=0}=\langle A\eta, \mathbf{e}_{1}\rangle\hspace{.5cm}\implies\hspace{.5cm} A\mathbf{e}_{2}=0, \;\;\mbox{ if }\;\;\eta\neq 0,$$
concluding the proof. 
\vspace{3mm}

4. Analogously as the previous cases, $\R\mathbf{e}_1\times\{0\}$ is $\varphi_t$-invariant if and only if  
\begin{equation}
\begin{cases}
e^{tA}\mathbf{e}_1\in \mathbb{R}\mathbf{e}_1 \\ 
\langle \mathbf{\Lambda }_{t}^{A-\mathrm{tr}A\cdot I_2}\eta ,\mathbf{%
e}_1\rangle= 0,
\end{cases}
\label{Eq:mainpro4}
\end{equation}%
which gives us $A\mathbf{e}_{1}=\lambda \mathbf{e}_{1}$ and, by derivation of the second equation in (\ref{Eq:mainpro4}) at $t=0$, $\left\langle \eta,%
\mathbf{v}\right\rangle =0$. Consequently, if we
write the matrix $A$ in canonical form, we see that $A$ is such that  
$$A\mathbf{e}_{1}=\lambda \mathbf{e}_{1},\hspace{.5cm} A\mathbf{e}_{2}=\beta \mathbf{e}_{2}+\alpha \mathbf{e}_{1}\hspace{.5cm}\text{ and }\hspace{.5cm}\eta \in \mathbb{R}\mathbf{e}_{2}~%
\text{with}~\alpha =0~\text{if}~\eta \neq 0,$$
showing the assertion.
\vspace{3mm}

5. The proof is similar as the items 3. and 4. and we will omit it.
\end{proof}

\section{Linear control systems on homogeneous spaces of $\HB$}

In this section, we classify all possible LCSs on homogeneous spaces
of $\HB$. For it, let $L\subset \HB$ be a closed subgroup and denote by $\pi:\HB\rightarrow L\setminus \HB$ the standard canonical projection.

\begin{definition}
A LCS on the homogeneous space $L\setminus \HB$ is the following control-affine system : 
\begin{equation}
\Sigma _{L\setminus \HB}:\quad \dot{P}=f_{0}(P)+\sum_{j=1}^m u_jf_{j}(P)
\label{Eq:contsyshomogeneous}
\end{equation}%
with $u\in \Omega ,P\in L\setminus\HB$ and $f_{0},f_{1}, \ldots, f_m$ are vector fields on $L\setminus \HB$ satisfying 
\begin{equation*}
\pi _{\ast }\circ \mathcal{X}=f_{0}\circ\pi \hspace{.5cm}\text{ and }\hspace{.5cm}\pi _{\ast }\circ B_j=f_{j}\circ\pi,\;\;j=1, \ldots, m,
\end{equation*}
\end{definition}
where $\XC$ is a linear vector field and $B_j$ are left-invariant vector fields and $m=3-\dim L$.

It follows from the Definition (\ref{defi:conjugado}) that a LCS on a
homogeneous space $L\setminus \HB$ is $\pi $-conjugated to a LCS on $\HB$. We also know (See Proposition 4.1., \cite{Jouan}) that the vector field $\pi_{\ast }\circ \mathcal{X}$ is well defined on $L\setminus \HB$ if and only if $L$ is invariant by the flow $\varphi _{t}$ of $\mathcal{X}$ which means $\varphi _{t}(L)=L$ for every $t\in \mathbb{R}$.

Now it becomes clear that in order to classify all the possible LCSs on the homogeneous spaces of $\HB$ we need to find the possible $\varphi_{t}$-invariant closed subgroups of $\HB$. Let $\Sigma_{L\setminus \HB}$ denote a LCS on $L\setminus \HB$ as in (\ref{Eq:contsyshomogeneous}) such that $\mathcal{X}$ and $B_j$, $j=1, \ldots, m$ are $\pi $-conjugated with the vector fields $f_{0}$ and $f_{1}, \ldots, f_m$, respectively. Let $\psi \in \mathrm{Aut}(\HB)$ such that $\widehat{L}=\psi (L)$ is one of the subgroups in Proposition \ref{MainPro}. Consider $\widehat{\mathcal{X}}$ and $\hat{B}_j$, $j=1, \ldots, m$ the vector fields satisfying
\begin{equation*}
\psi _{\ast }\circ \widehat{\mathcal{X}}=\mathcal{X}\circ \psi \hspace{.5cm}\mbox{ and }\hspace{.5cm} \psi _{\ast }\circ \widehat{B}_j=B_j\circ \psi, \;\;j=1, \ldots, m.
\end{equation*}%
That $L$ is invariant under the flow of $\mathcal{X}$ implies that $\widehat{L}$ is also invariant under the flow of $\widehat{\mathcal{X}}$. Hence we have well defined vector fields $\widehat{f}_{0}$ and $\widehat{f}_{1}, \ldots, \widehat{f}_{m}$ on $%
\widehat{L}\setminus \HB$ determined by the relations 
\begin{equation*}
\widehat{f_{0}}\circ \widehat{\pi }=\widehat{\pi }_{\ast }\circ \widehat{\mathcal{X}}\hspace{.5cm}\mbox{ and }\hspace{.5cm} 
\widehat{f_{1}}\circ \widehat{\pi }=\widehat{\pi }_{\ast }\circ \widehat{B},  \;\;j=1, \ldots, m,
\end{equation*}%
where $\widehat{\pi}:\HB\rightarrow \widehat{L}\setminus \HB$ is the canonical projection. Since the map $\widehat{\psi }:L\setminus \HB\rightarrow  \widehat{L}\setminus \HB$ defined by the relation $\widehat{\psi }\circ \pi =\widehat{\pi }\circ \psi $ is a diffeomorphism, the fact that 
\begin{equation*}
\widehat{\psi }_{\ast }\circ f_{0}=\widehat{f}_{0}\circ \widehat{\psi }\hspace{.5cm}\mbox{ and }\hspace{.5cm} \widehat{\psi}_{\ast }\circ f_{j} =\widehat{f}_{j}\circ \widehat{\psi }, \;\;j=1, \ldots m,
\end{equation*}
shows us that $\Sigma _{L\setminus \HB}$ is equivalent to the LCS on $\widehat{L}\setminus \HB$ given by
\begin{equation*}
\Sigma _{\widehat{L}\setminus \HB}:\quad \dot{Q}=\widehat{f}_{0}(Q)+\sum_{j=1}^mu_j\widehat{
f}_{j}(Q).
\end{equation*}%
As a result, we can
assume that subgroup $L$ is one of the subgroups determined in Proposition \ref{MainPro}, and we just need to examine the following cases.

\subsection{The zero-dimensional (i.e., discrete) case}
In this section we consider the homogeneous spaces of $\HB$ by zero-dimensional subgroups. By Proposition \ref{MainPro1} and Remark \ref{NormalS}, up to isomorphisms, the only subgroups we have to consider are 
   $$\Z^2\times\Z \frac{1}{p}, \;\; p\in\N\;\;\;\mbox{ and }\;\;\;\Z\mathbf{e}_1\times \Z p, \;\;p=0, 1.$$
   However, by Proposition \ref{MainPro} the subgroup $\Z^2\times\Z \frac{1}{p}, p\in\N$ is invariant by the flow of a linear vector field $\XC$ if and only if $\XC\equiv 0$. As a consequence, any induced LCS on the homogeneous space $\left(\Z^2\times\Z \frac{1}{p}\right)\setminus\HB$ is driftless left-invariant system and hence its dynamical behaviour is well known (See for instance \cite{Sussmann}). 
   
   Let us consider the case $L=\Z\mathbf{e}_1\times\Z p$ for $p=0,1$. If $(\mathbf{v}_{1}, z_1), (\mathbf{v}_{2}, z_2)\in\HB$ are such that  $L*(\mathbf{v}_{1}, z_1)=L*(\mathbf{v}_{2}, z_2)$, then by the definition there exists $(m, n)\in\Z\times\Z p$  such that 
   $$(m\mathbf{e}_{1}, n)*(\mathbf{v}_{1}, z_1)=(\mathbf{v}_{2}, z_2)\iff \left(m\mathbf{e}_{1}+\mathbf{v}_{1}, z_1+\frac{1}{2}\langle m\mathbf{e}_{1}, \theta\mathbf{v}_{1}\rangle+n\right)=(\mathbf{v}_{2}, z_2)\iff \left\{\begin{array}{l}
   	m\mathbf{e}_{1}=\mathbf{v}_{2}-\mathbf{v}_{1}\\ z_2=z_1+\frac{m}{2}\langle \mathbf{e}_{1}, \theta\mathbf{v}_{1}\rangle+n
   \end{array}\right.$$
   Using the first equation we obtain that 
			 $$m=\langle \mathbf{v}_{2}, \mathbf{e}_{1}\rangle-\langle \mathbf{v}_{1}, \mathbf{e}_{1}\rangle, \hspace{.5cm}\langle \mathbf{v}_{2}, \mathbf{e}_{2}\rangle=\langle \mathbf{v}_{1}, \mathbf{e}_{2}\rangle, \hspace{.5cm}\mbox{ and }\hspace{.5cm}\langle \mathbf{e}_{1}, \theta\mathbf{v}_{1}\rangle=\langle \mathbf{e}_{1}, \theta\mathbf{v}_{2}\rangle.$$
			 
			 Using now the second equation and the previous relations, gives us that 
			 $$z_2=z_1+\frac{1}{2}\langle t\mathbf{e}_{1}, \theta\mathbf{v}_{1}\rangle+n=z_1+\frac{1}{2}(\langle \mathbf{v}_{2}, \mathbf{e}_{1}\rangle-\langle \mathbf{v}_{1}, \mathbf{e}_{1}\rangle)\langle \mathbf{e}_{1}, \theta\mathbf{v}_{1}\rangle+n$$
			 $$=z_1-\frac{1}{2}\langle \mathbf{v}_{1}, \mathbf{e}_{1}\rangle\langle \mathbf{e}_{1}, \theta\mathbf{v}_{1}\rangle+\frac{1}{2}\langle \mathbf{v}_{2}, \mathbf{e}_{1}\rangle\langle \mathbf{e}_{1}, \theta\mathbf{v}_{2}\rangle+n$$
			 and so 
			 $$\left(z_2+\frac{1}{2}\langle \mathbf{v}_{2}, \mathbf{e}_{1}\rangle\langle \mathbf{v}_{2}, \mathbf{e}_{1}\rangle\right)-\left(z_1+\frac{1}{2}\langle \mathbf{v}_{1}, \mathbf{e}_{1}\rangle\langle \mathbf{v}_{1}, \mathbf{e}_{2}\rangle\right)=n.$$
   
   Therefore, $L*(\mathbf{v}_{1}, z_1)=L*(\mathbf{v}_{2}, z_2)$ if and only if
   \begin{equation}
   	[x_1]_1=[x_2]_1, \;\;\; y_1=y_2\;\;\mbox{ and }\;\;\left[z_1+\frac{1}{2}x_1y_1\right]_p=\left[z_2+\frac{1}{2}x_2y_2\right]_p
   \end{equation}
   where $\mathbf{v}_{i}=(x_i, y_i), i=1, 2$, $[x]_1=x+\Z$ and $[x]_0=x$.
   
   Therefore, the homogeneous space $((\Z\mathbf{e}_1\times \Z p)\setminus \HB$ is identified with $(\T\times\R)\times \T^p$, where $\T^0:=\R$ and $\T^1=\R/\Z$. The canonical projection is given by 			 
   $$\pi_{0, p}:\HB\rightarrow (\T\times\R)\times\T^p, \;\;\;((x, y), z)\mapsto \left([x], y, \left[z+\frac{1}{2}xy\right]_p\right).$$
   
    \begin{remark}
    	\label{map2}
    	By considering the maps $f:\HB\rightarrow\HB$ and $h_p:\HB\rightarrow \R\times\T^p$ defined, respectively, as
    	$$f((x, y), z)=\left(x, y, z+\frac{1}{2}xy\right)\;\;\;\mbox{ and }\;\;\; h_p((x, y), z)=([x], y, [z]_p),$$
    	and using that the differential of the canonical projection $\R\rightarrow \R/\Z$ is the identity map, it is easy to see that 
    	$$h_p\circ f=\pi_{0, p}\;\;\;\;\mbox{ and }\;\;\;\;(\pi_{0, p})_*=f_*, \mbox{ for }p=0, 1.$$
    \end{remark}

\subsubsection{LCS's on $(\Z\mathbf{e}_1\times \Z p)\setminus\HB$, $p=0, 1$} 

As previously, by Proposition \ref{MainPro}, if $\XC=(\eta, A)$ is a linear vector field on $\HB$ whose flow let $\Z\mathbf{e}_1\times \Z p$, $p=0, 1$ invariant then 
$$\eta=(0, \gamma)\;\;\;\mbox{ and }\;\;\;A=\left(\begin{array}{cc}
	0 & \alpha\\ 0 & p\beta
\end{array}\right), \;\;\mbox{ with }\alpha=0\;\mbox{ if }\;\gamma\neq 0.$$
Therefore, in coordinates, $\XC$ is given by the expression 
$$\XC((x, y), z)=((\alpha y, p\beta y), \gamma y+p\beta z).$$
Using Remark \ref{map2}, we get that 
$$(d\pi_{0, p})_{((x, y), z)}=\left(\begin{array}{ccc}
1 & 0 & 0 \\0 & 1 & 0\\ \frac{1}{2}y & \frac{1}{2}x & 1
\end{array}\right),$$
implying that
$$(d\pi_{0, 0})_{((x, y), z)}\XC((x, y), z)=\left(\begin{array}{ccc}
1 & 0 & 0\\	0 & 1 & 0\\ \frac{1}{2}y & \frac{1}{2}x & 1
\end{array}\right)\left(\begin{array}{c}
	\alpha y\\p\beta y\\\gamma y+p\beta z
\end{array}\right)$$
$$=\left(\begin{array}{c}
\alpha y\\	p\beta y\\ p\beta\left(z+\frac{1}{2}xy\right)+\frac{1}{2}\alpha y^2+2\gamma y
\end{array}\right)=\left(\alpha y, p\beta y, p\beta\left(z+\frac{1}{2}xy\right)+\frac{1}{2}\alpha y^2+2\gamma y\right),$$
and hence,
$$\widehat{\XC}_{0, p}([u]_1, s, [t]_p)=\left(\alpha s, p\beta s, p\beta t+\frac{1}{2}\alpha s^2+2\gamma s\right), \;\;\mbox{ with }\;\;\alpha=0\;\mbox{ if }\;\;\gamma\neq 0,$$
is the general expression of a vector field on $(\Z\mathbf{e}_1\times \Z p)\setminus\HB$ induced by a linear vector field on $\HB$.

Now, let us consider a left-invariant vector field $B$. In coordinates, we have that
$$B((x, y), z)=\left((a, b), c+\frac{1}{2}(ay-bx)\right),$$
and hence, 
\begin{align*}
\left( d\pi _{0,p}\right) _{((x,y),z)}B((x,y),z)=\left( 
\begin{array}{lll}
1 & 0 & 0\\
0 & 1 & 0 \\ 
\frac{1}{2}y & \frac{1}{2}x & 1%
\end{array}%
\right) & \left( 
\begin{array}{c}
a \\ 
b \\ 
c+\frac{1}{2}(ay-bx)%
\end{array}%
\right)  
=\left( 
\begin{array}{c}
a\\
b \\ 
c+ay%
\end{array}%
\right) & =\big(a, b,c+ay)\text{.}
\end{align*}

Therefore, the general expression of a vector field on $(\Z\mathbf{e}_1\times \Z p)\setminus\HB$ induced by a left-invariant vector field on $\HB$ is given by 
$$\widehat{B}_{0, p}([u]_1, s, [t]_p)=\left(a, b, c+as\right), \;\;([u]_1, s, [t]_p)\in\T\times\R\times\T^p.$$
As a consequence, we have the following expression for a general LCS on $(\T\times\R)\times \T^p$ for $p=0,1$.

\begin{proposition}
	A LCS on $(\Z\mathbf{e}_1\times \Z)\setminus\HB\simeq \T\times\R\times\T^p, p=0, 1$ has the form
\begin{equation*}
(\Sigma _{0,p}):\quad \left\{
		\begin{array}{l}
			\dot{u}=\alpha s+\omega_1 a_1+\omega_2a_2+\omega_3a_3\\
			\dot{s}=p\beta s+\omega_1 b_1+\omega_2b_2+\omega_3b_3\\
			\dot{t}=p\beta t+\frac{1}{2}\alpha s^2+\gamma s+\omega_1c_1+\omega_2c_2+\omega_3c_3+(\omega_1 a_1+\omega_2a_2+\omega_3a_3)s
		\end{array}\right.
\end{equation*}
where $\omega \in \Omega $, $a_i, b_i, c_i, \alpha, \gamma, \lambda\in\R, i=1, 2, 3$ and $\alpha=0$ if $\gamma\neq 0$.
\end{proposition}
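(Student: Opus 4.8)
The plan is to prove the Proposition by assembling the ingredients prepared above, since at this point all the real work has already been done. First I would invoke the reduction carried out at the beginning of Section~5: conjugating by a suitable $\psi\in\mathrm{Aut}(\HB)$, there is no loss of generality in assuming that $L=\Z\mathbf{e}_1\times\Z p$ is in the standard form of Proposition~\ref{MainPro}, so that the canonical projection is the explicit map $\pi_{0,p}\colon\HB\rightarrow\T\times\R\times\T^p$, $\pi_{0,p}((x,y),z)=([x],y,[z+\frac{1}{2}xy]_p)$. Since $\dim L=0$ we have $m=3-\dim L=3$, so by the definition of a LCS on a homogeneous space such a system is exactly a control-affine family $\dot P=f_0(P)+\sum_{j=1}^3\omega_j f_j(P)$ in which $f_0$ is $\pi_{0,p}$-related to a linear vector field $\XC$ and $f_1,f_2,f_3$ are $\pi_{0,p}$-related to left-invariant vector fields $B_1,B_2,B_3$.

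Next I would pin down the admissible drift: by Proposition~\ref{MainPro}, items 2 and 3 (according to whether $p=0$ or $p=1$), the flow of $\XC=(\eta,A)$ leaves $\Z\mathbf{e}_1\times\Z p$ invariant precisely when $\eta=(0,\gamma)$ and $A$ is upper triangular with the displayed entries, subject to $\alpha=0$ whenever $\gamma\neq0$; in coordinates this reads $\XC((x,y),z)=((\alpha y,p\beta y),\gamma y+p\beta z)$. Then I would use the two Jacobian computations recorded just before the statement. By Remark~\ref{map2}, $(\pi_{0,p})_*=f_*$ for $f((x,y),z)=(x,y,z+\frac{1}{2}xy)$, and the matrix of $(d\pi_{0,p})_{((x,y),z)}$ is lower triangular with rows $(1,0,0)$, $(0,1,0)$, $(\frac{1}{2}y,\frac{1}{2}x,1)$. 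Applying it to $\XC$ yields the induced drift $f_0=\widehat{\XC}_{0,p}([u]_1,s,[t]_p)=\big(\alpha s,\,p\beta s,\,p\beta t+\frac{1}{2}\alpha s^2+\gamma s\big)$, and applying it to a general left-invariant field $B_j=((a_j,b_j),c_j+\frac{1}{2}(a_jy-b_jx))$ yields $f_j=\widehat{B}_{0,p}^{(j)}([u]_1,s,[t]_p)=(a_j,b_j,c_j+a_js)$.

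Finally I would substitute these into $\dot P=f_0(P)+\sum_{j=1}^3\omega_j f_j(P)$ and read off the three scalar equations in the coordinates $([u]_1,s,[t]_p)$; these are exactly the system $(\Sigma_{0,p})$, with the side condition ``$\alpha=0$ if $\gamma\neq0$'' inherited verbatim from Proposition~\ref{MainPro}. The two points that warrant a sentence rather than a computation are: that $\widehat{\XC}_{0,p}$ and the $\widehat{B}_{0,p}^{(j)}$ are genuinely well defined on the quotient, i.e.\ independent of the chosen representative of a coset, which is immediate from Remark~\ref{map2} together with the $\pi_{0,p}$-relatedness set up in Section~5; and that every left-invariant $B_j$ is $\pi_{0,p}$-projectable, which holds for any closed subgroup $L$ because the flow of a left-invariant field is a right translation $g\mapsto g\exp(tB_j)$ and hence descends to $L\setminus\HB$. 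I do not expect a genuine obstacle here: the content is entirely contained in Propositions~\ref{MainPro1} and~\ref{MainPro} and in the explicit form of $\pi_{0,p}$, and what remains is only the routine bookkeeping of packaging the drift together with the three control directions into a single control-affine family.
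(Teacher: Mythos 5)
Your argument is correct and follows essentially the same route as the paper: reduce to the standard subgroup via an automorphism, read off the admissible drift from Proposition \ref{MainPro}, push $\XC$ and the $B_j$ forward through the explicit Jacobian of $\pi_{0,p}$ from Remark \ref{map2}, and assemble the three scalar equations (your coefficient $\gamma s$ in the third equation is the one consistent with the stated system). The only addition beyond the paper's derivation is your explicit remark that left-invariant fields always descend to $L\setminus\HB$ because their flows are right translations, which the paper leaves implicit.
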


\subsection{The one-dimensional case}
In this section we analyze the homogeneous spaces of $\HB$ by one-dimensional subgroups. Since we are interested in the case where the homogeneous space is not a Lie group, we have by Proposition \ref{MainPro1} and Remark \ref{NormalS} that, up to isomorphisms, the only subgroups we have to consider are $\mathbb{R}\mathbf{e}_1\times \mathbb{Z} p$, $p=0, 1$. Let $\left( \mathbf{v}_{1},z_{1}\right) ,\left( \mathbf{v}_{2},z_{2}\right) \in \HB$ and assume that $L\ast \left( \mathbf{v}_{1},z_{1}\right) =L\ast \left( \mathbf{v}_{2},z_{2}\right) $, where $L=\mathbb{R}\mathbf{e}_1\times \mathbb{Z}p$ for $p=0,1$. By definition, there exists $(t,n)\in \mathbb{R}\times \mathbb{Z}$ such that 
			 $$(t\mathbf{e}_{1}, n)*(\mathbf{v}_{1}, z_1)=(\mathbf{v}_{2}, z_2)\iff \left(t\mathbf{e}_{1}+\mathbf{v}_{1}, z_1+\frac{1}{2}\langle t\mathbf{e}_{1}, \mathbf{v}_{1}^*\rangle+n\right)=(\mathbf{v}_{2}, z_2)\iff \left\{\begin{array}{l}
			 t\mathbf{e}_{1}=\mathbf{v}_{2}-\mathbf{v}_{1}\\ z_2=z_1+\frac{1}{2}\langle t\mathbf{e}_{1}, \mathbf{v}_{1}^*\rangle+n
			 \end{array}\right.$$
			 Similar calculations as in the previous case, allows us to obtain that 
			 \begin{equation*}
			 L*((x_1, y_1), z_1)=L*((x_2, y_2), z_2)\;\;\iff\;\; y_1=y_2\;\;\mbox{ and }\;\;\left[z_1+\frac{1}{2}x_1y_1\right]_p=\left[z_2+\frac{1}{2}x_2y_2\right]_p,
			 \end{equation*}
			 where by definition $[x]_1=x+\Z$ and $[x]_0=x$. Therefore, the homogeneous space $(\R\mathbf{e}_1\times \Z p)\setminus \HB$ is identified with $\R\times \T^p$, where $\T^0:=\R$ and $\T^1=\R/\Z$. The canonical projection is given by 			 
			 $$\pi_{1, p}:\HB\rightarrow \R\times\T^p, \;\;\;((x, y), z)\mapsto \left(y, \left[z+\frac{1}{2}xy\right]_p\right).$$	
			 
			 \begin{remark}
			 	\label{map}
			 Similarly as in the zero dimensional case, we can consider the maps $f:\HB\rightarrow\HB$ and $g_p:\HB\rightarrow \R\times\T^p$ given by as 
			 $$f((x, y), z)=\left(x, y, z+\frac{1}{2}xy\right)\;\;\;\mbox{ and }\;\;\; g_p((x, y), z)=(y, [z]_p),$$
			 it is easy to see that $g_p\circ f=\pi_{1, p}$ for $p=0, 1$. Moreover, since the differential of the canonical projection $\R\rightarrow \R/\Z$ is the identity, we get that $(\pi_{1, p})_*=\pi_2\circ f_*$, $p=0, 1$, where 
			 $$\pi_2:\R^3\rightarrow\R^2, \hspace{1cm}\pi_2(x, y, z)=(y, z).$$
			 
			\end{remark}
			 
		 \subsubsection{LCS's on $(\R\mathbf{e}_1\times \{0\})\setminus\HB$}
			 		
Now, let $\mathcal{X}$ be a linear vector field on $\HB$ and assume that $\mathbb{R}%
\mathbf{e}_1\times \{0\}$ is invariant under the flow of $\mathcal{X}=(\eta, A)$. By Proposition \ref{MainPro} it follows that
			 $$\eta=(0, \gamma)\;\;\;\mbox{ and }\;\;\;A=\left(\begin{array}{cc}
			 	\lambda & \alpha\\ 0 & \beta
			 \end{array}\right), \;\;\mbox{ with }\alpha=0\;\mbox{ if }\;\gamma\neq 0.$$
		     As a consequence, in coordinates, we get that
		     $$\XC((x, y), z)=((\lambda x+\alpha y, \beta y), \gamma y+(\lambda+\beta)z).$$
		     Moreover, by a simple calculation, we get that 
		     $$(df)_{((x, y), z)}=\left(\begin{array}{ccc}
		     	1 & 0 & 0\\0 & 1 & 0\\ \frac{1}{2}y & \frac{1}{2}x & 1
		     \end{array}\right)\hspace{.5cm}\implies\hspace{.5cm}(d\pi_{1, p})_{((x, y), z)}=\left(\begin{array}{ccc}
		     	0 & 1 & 0\\ \frac{1}{2}y & \frac{1}{2}x & 1
		     \end{array}\right),$$
	     and consequently, using Remark \ref{map}
\begin{equation*}
\begin{gathered} \left(d \pi_{1,0}\right)_{((x, y), z)} \mathcal{X}\big((x,
y), z\big)=\left(\begin{array}{lll} 0 & 1 & 0 \\ \frac{1}{2}y & \frac{1}{2}x & 1
\end{array}\right)\left(\begin{array}{c} \lambda x+\alpha y \\ \beta y \\
\gamma y+(\lambda+\beta) z \end{array}\right) \\ =\left(\begin{array}{c}
\beta y \\ (\lambda+\beta)\left(z+\frac{1}{2}xy\right)+\frac{1}{2}\alpha y^{2}+ \gamma y
\end{array}\right)=\left(\beta y,(\lambda+\beta)\left(z+\frac{1}{2}xy\right)+\frac{1}{2}\alpha y^{2}+
\gamma y\right) . \end{gathered}
\end{equation*}%
Therefore, the general expression of a vector field on $(\R\mathbf{e}_1\times\{0\})\setminus\HB$ induced by a linear vector field on $\HB$ is given by 
$$\widehat{\XC}_{1, 0}(s, t)=\left(\beta s, (\lambda+\beta)t+\alpha s^2+\gamma s\right), \;\;\mbox{ with }\;\;\alpha=0\;\mbox{ if }\;\;\gamma\neq 0.$$

Analogous calculations, allow us to conclude that
	 $$\widehat{B}_{1, 0}(s, t)=\left(b, c+as\right), \;\;(s, t)\in\R\times\R.$$
	 
	 We have the following.
	 
	 \begin{proposition}
	 	A  LCS on $(\R\mathbf{e}_1\times \{0\})\setminus\HB\simeq\R^2$ has the form
\begin{equation*}
(\Sigma _{1,0}):\quad \left\{ 
	 	\begin{array}{l}
	 		\dot{s}=\beta s+\omega_1 b_1+\omega_2 b_2+\omega_3 b_3\\
	 		\dot{t}=(\lambda+\beta)t+\frac{1}{2}\alpha s^2+\gamma s+\omega_1c_1+\omega_2c_2+\omega_3c_3+(\omega_1 a_1+\omega_2a_2+\omega_3a_3)s
	 	\end{array}\right.
\end{equation*}
where $\omega \in \Omega $ with  $a_i, b_i, c_i, \alpha, \beta, \gamma, \lambda\in\R, i=1, 2, 3$ and $\alpha=0$ if $\gamma\neq 0$. 
	 \end{proposition}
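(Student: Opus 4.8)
The plan is to read off the system directly from the two push‑forward computations carried out immediately above the statement, together with the reduction at the start of Section~5 that allows us to assume the subgroup is exactly $L=\R\mathbf{e}_1\times\{0\}$.

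First I would unwind the definition of a LCS on $L\setminus\HB$: by construction such a system is $\pi_{1,0}$‑conjugated to a LCS $\Sigma_\HB$ on $\HB$ whose drift $\XC=(\eta,A)$ has flow leaving $L$ invariant and whose control fields $B_1,B_2,B_3$ are left‑invariant. By Proposition~\ref{MainPro}(4) the invariance of $L$ under $\{\varphi_t\}$ is equivalent to $A\mathbf{e}_1=\lambda\mathbf{e}_1$, $A\mathbf{e}_2=\beta\mathbf{e}_2+\alpha\mathbf{e}_1$ and $\eta=(0,\gamma)$, with $\alpha=0$ whenever $\gamma\neq0$; in coordinates this is $\XC((x,y),z)=((\lambda x+\alpha y,\beta y),\gamma y+(\lambda+\beta)z)$, exactly as already recorded.

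Next I would transport $\XC$ and the $B_j$ to $L\setminus\HB\simeq\R\times\T^0=\R^2$ through $\pi_{1,0}$. Using Remark~\ref{map}, namely $\pi_{1,0}=g_0\circ f$ with $f((x,y),z)=(x,y,z+\tfrac12 xy)$ and $(\pi_{1,0})_*=\pi_2\circ f_*$, one obtains the Jacobian of $\pi_{1,0}$ and computes $(d\pi_{1,0})\XC=\big(\beta y,\,(\lambda+\beta)(z+\tfrac12 xy)+\tfrac12\alpha y^2+\gamma y\big)$. Since $s=y$ and $t=z+\tfrac12 xy$ are precisely the coordinates on $\R^2$, this descends to $\widehat\XC_{1,0}(s,t)=\big(\beta s,\,(\lambda+\beta)t+\tfrac12\alpha s^2+\gamma s\big)$; the fact that this expression depends only on $(s,t)$ and not on the chosen coset representative is exactly the invariance of $L$ under the flow (the criterion of \cite{Jouan}). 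The identical computation applied to a general left‑invariant field $B=\big((a,b),c+\tfrac12(ay-bx)\big)$ yields $\widehat B_{1,0}(s,t)=(b,\,c+as)$, which is automatically well defined on $L\setminus\HB$ because a left‑invariant field is a fortiori invariant under left translations by $L$.

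Finally I would sum the pieces: writing $B_j=\big((a_j,b_j),c_j+\tfrac12(a_jy-b_jx)\big)$ for $j=1,2,3$, the conjugated system on $\R^2$ is $\dot P=\widehat\XC_{1,0}(P)+\sum_{j=1}^3\omega_j\widehat B_{1,0}^{(j)}(P)$, and expanding in the coordinates $(s,t)$ produces precisely the displayed system $(\Sigma_{1,0})$, the constraint $\alpha=0$ when $\gamma\neq0$ being inherited from Proposition~\ref{MainPro}(4). There is essentially no hard step here; the only place that needs care is the push‑forward of the drift, where the term $\tfrac12 xy$ in $f$ couples with the $z$‑component of $\XC$ to generate both the quadratic term $\tfrac12\alpha s^2$ and the correct dilation factor $\lambda+\beta$ multiplying $t$. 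Once that single computation is checked, the remainder is pure bookkeeping.
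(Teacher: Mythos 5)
Your proposal is correct and follows essentially the same route as the paper: the invariance conditions of Proposition \ref{MainPro}(4) give the form of $\XC=(\eta,A)$, the factorization $\pi_{1,0}=g_0\circ f$ of Remark \ref{map} gives the Jacobian used to push forward $\XC$ and the left-invariant fields, and the proposition is the resulting bookkeeping. The only divergence is cosmetic: you consistently carry the factor $\tfrac12$ in $\tfrac12\alpha s^2$, which agrees with the displayed computation and the proposition itself, whereas the paper's intermediate formula for $\widehat{\XC}_{1,0}$ drops it (a typo).
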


\subsubsection{LCS's on $(\R\mathbf{e}_1\times \Z)\setminus\HB$} 
Let us now consider the other one-dimensional subgroup $\R\mathbf{e}_1\times \Z$. By Proposition \ref%
{MainPro}, if $\R\mathbf{e}_1\times \Z$ is
invariant by the flow of $\mathcal{X}=(\eta ,A)$, we have that 
\begin{equation*}
\eta =(0,\gamma )\quad \text{and}\quad A=\left( 
\begin{array}{cc}
\lambda  & \alpha  \\ 
0 & -\lambda 
\end{array}%
\right) 
\end{equation*}%
with$~\alpha =0~$if$~\gamma \neq 0$. As a consequence, in coordinates, we
have that 
\begin{equation*}
\mathcal{X}\big((x,y),z\big)=\big((\lambda x+\alpha y,-\lambda y),\gamma y%
\big)\text{.}
\end{equation*}%
By Remark \ref{map}, we get that 
\begin{equation*}
\begin{gathered} \left(d \pi_{1,1}\right)_{((x, y), z)} \mathcal{X}((x, y),
z)=\left(\begin{array}{lll} 0 & 1 & 0 \\ \frac{1}{2}y & \frac{1}{2}x & 1
\end{array}\right)\left(\begin{array}{c} \lambda x+\alpha y \\ -\lambda y \\
\gamma y \end{array}\right) \\ =\left(\begin{array}{c} -\lambda y \\ \frac{1}{2}\alpha
y^{2}+ \gamma y \end{array}\right)=\left(-\lambda y, \frac{1}{2}\alpha y^{2}+ \gamma
y\right), \end{gathered}
\end{equation*}%
and hence, 
$$\widehat{\XC}_{1, 1}(s, t)=\left(-\lambda s,\frac{1}{2}\alpha s^2+\gamma s\right), \;\;\mbox{ with }\;\;\alpha=0\;\mbox{ if }\;\;\gamma\neq 0,$$
is the general expression of a vector field on $(\R \mathbf{e}_{1}\times\Z)\setminus\HB$ induced by a linear vector field on $\HB$. Analogous calculations, allow us to conclude that
$$\widehat{B}_{1, 1}(s, [t])=\left(b, c+as\right), \;\;(s, [t])\in\R\times\T,$$
is the general expression of a vector field on $(\R \mathbf{e}_{1}\times\Z)\setminus\HB$ induced by a left-invariant vector field. As a consequence, we have the following expression for a general LCS on $\R\times\T$.
      
\begin{proposition}
A one-input LCS on $(\R \mathbf{e}_{1}\times\Z)
\setminus \HB\simeq \mathbb{R}\times \mathbb{T}$ has the form 
\begin{equation*}
(\Sigma _{1,1}):\quad \left\{ 
\begin{array}{l}
\dot{s}=-\lambda s+\omega_1 b_1+\omega_2 b_2+\omega_3 b_3 \\ 
\dot{[t]}=\frac{1}{2}\alpha s^{2}+\gamma s+\omega_1c_1+\omega_2c_2+\omega_3c_3+(\omega_1 a_1+\omega_2a_2+\omega_3a_3)s
\end{array}%
\right. 
\end{equation*}%
where $\omega \in \Omega $ with $a_i,b_i,c_i,\alpha ,\gamma ,\lambda \in \mathbb{R}, i=1, 2, 3
$ and $\alpha =0$ if $\gamma \neq 0$.

\end{proposition}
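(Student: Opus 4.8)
The plan is to derive $(\Sigma_{1,1})$ directly from the definition of a LCS on a homogeneous space, reusing the two pushforward computations already displayed just before the statement. First I would invoke the reduction made at the start of Section~5: up to an equivalence coming from an automorphism of $\HB$ we may assume the subgroup is exactly $L=\R\mathbf{e}_1\times\Z$, so that $L\setminus\HB$ is identified with $\R\times\T$ through $\pi_{1,1}((x,y),z)=(y,[z+\tfrac12 xy]_1)$, and a LCS on it is a control-affine system $\dot P=f_0(P)+\sum_i\omega_i f_i(P)$ whose vector fields satisfy $(\pi_{1,1})_*\circ\XC=f_0\circ\pi_{1,1}$ and $(\pi_{1,1})_*\circ B_i=f_i\circ\pi_{1,1}$ for a linear vector field $\XC$ and left-invariant vector fields $B_i$.

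Next I would pin down the drift. Since $f_0$ is $\pi_{1,1}$-related to a linear vector field $\XC=(\eta,A)$ whose flow leaves $L$ invariant, Proposition~\ref{MainPro}(5) forces $\eta=(0,\gamma)$, $A\mathbf{e}_1=\lambda\mathbf{e}_1$ and $A\mathbf{e}_2=-\lambda\mathbf{e}_2+\alpha\mathbf{e}_1$ with $\alpha=0$ whenever $\gamma\neq0$, i.e. in coordinates $\XC((x,y),z)=((\lambda x+\alpha y,-\lambda y),\gamma y)$. Applying $(\pi_{1,1})_*=\pi_2\circ f_*$ of Remark~\ref{map} (equivalently the explicit Jacobian of $\pi_{1,1}$ recorded above) and simplifying yields $f_0=\widehat{\XC}_{1,1}(s,t)=(-\lambda s,\tfrac12\alpha s^2+\gamma s)$. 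For the control fields, writing a generic left-invariant field as $B_i((x,y),z)=((a_i,b_i),c_i+\tfrac12(a_iy-b_ix))$ and pushing it forward in the same way gives $f_i=\widehat{B}_{1,1}(s,[t])=(b_i,c_i+a_is)$. Substituting $f_0$ and the $f_i$ into $\dot{(s,[t])}=f_0+\sum_i\omega_i f_i$ and reading off the two coordinates produces exactly $(\Sigma_{1,1})$, the side condition $\alpha=0$ if $\gamma\neq0$ being inherited verbatim from Proposition~\ref{MainPro}.

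The only delicate points, and the nearest thing to an obstacle, are to make sure that $\widehat{\XC}_{1,1}$ and $\widehat{B}_{1,1}$ are genuinely well defined on $\R\times\T$, i.e. independent of the coset representative, and that the apparent dependence on the suppressed coordinate $x$ really cancels after the pushforward. The first is precisely Jouan's criterion (Proposition~4.1 of \cite{Jouan}) applied to the $\varphi_t$-invariance of $L$ for the drift, together with the fact that left-invariant fields descend under the left $L$-action for the $B_i$; the second is transparent from the computation, since the $\tfrac12 x$ entry in the Jacobian of $\pi_{1,1}$ cancels against the cross terms in $\XC$ and in $B_i$, leaving no $x$. Everything else is the routine matrix multiplication already carried out before the statement.
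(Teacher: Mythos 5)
Your proposal is correct and follows essentially the same route as the paper: reduce to the standard subgroup $\R\mathbf{e}_1\times\Z$ via an automorphism, use Proposition \ref{MainPro}(5) to constrain $\XC=(\eta,A)$, and push forward $\XC$ and the left-invariant fields through $(\pi_{1,1})_*=\pi_2\circ f_*$ to obtain $\widehat{\XC}_{1,1}(s,t)=(-\lambda s,\tfrac12\alpha s^2+\gamma s)$ and $\widehat{B}_{1,1}(s,[t])=(b_i,c_i+a_is)$, which assemble into $(\Sigma_{1,1})$. Your added remarks on well-definedness (Jouan's criterion) and the cancellation of the $x$-dependence match what the paper establishes earlier in Section 5 and in the displayed computations preceding the statement.
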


 \section{Controllability and control sets of $\Sigma _{1,1}$}
We have classified so far all possible LCSs on homogeneous spaces of Heisenberg group and it becomes very difficult to determine controllability issue and control sets of each one of these dynamics. Hence we will content ourselves in this last section of the article studying as an example a one-input control system on the non simply connected homogeneous space $\mathbb{R}\times \mathbb{T}$ of dimension two. A much more detailed exposition involving all possible systems will be presented in a future work. Firstly, let's prove the following lemma, which we use in the proof of the related theorem.

\begin{lemma}
\label{lemma}
 Let $(\Sigma _\mathbb{R}):~\dot{s}=-\lambda s+\omega b   $ where $b \neq 0$ and $\omega\in\Omega:=[\omega_*, \omega^*]$. Then $\Sigma _\mathbb{R}$ admits only one control set $\mathcal{C}_{\Sigma_\mathbb{R}}$ satisfying:
 \begin{align*}
\begin{cases}
&\mathcal{C}_{\Sigma _\mathbb{R}}=\frac{b}{\lambda} \Omega \quad \text{if}\quad \lambda>0\quad \text{or}, \\
&\mathcal{C}_{\Sigma_\mathbb{R}}=\text{int}~(\frac{b}{\lambda} \Omega) \quad \text{if} \quad  \lambda<0 \quad \text{or}, \\
&\mathcal{C}_{\Sigma _\mathbb{R}}=\mathbb{R} \quad \text{if} \quad \lambda=0.
\end{cases}
\end{align*}   
\end{lemma}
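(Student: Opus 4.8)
The plan is to treat $(\Sigma_\R)$ as a scalar affine ODE and to exploit monotonicity. We may assume $b>0$ (otherwise replace $s$ by $-s$, which produces the same kind of system with $b$ replaced by $-b$). Under a constant control $\omega\in\Omega$ the solution is $\phi(t,s_0,\omega)=e^{-\lambda t}(s_0-\tfrac{b\omega}{\lambda})+\tfrac{b\omega}{\lambda}$ when $\lambda\neq0$, with unique equilibrium $\tfrac{b\omega}{\lambda}$, and $\phi(t,s_0,\omega)=s_0+b\omega t$ when $\lambda=0$. As $\omega$ ranges over $\Omega=[\omega_*,\omega^*]$ these equilibria fill exactly the interval $\tfrac b\lambda\Omega$, which is nondegenerate and contains $0$ in its interior since $b\neq0$ and $0\in\operatorname{int}\Omega$. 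I would isolate three elementary facts: (a) $s\in\operatorname{int}(\tfrac b\lambda\Omega)$ iff some $\omega\in\operatorname{int}\Omega$ satisfies $-\lambda s+b\omega=0$, so on $\tfrac b\lambda\Omega$ the state can be held at rest; (b) for $s$ strictly outside $\tfrac b\lambda\Omega$ the velocity $-\lambda s+b\omega$ has the same sign for every admissible $\omega$ --- pointing toward $\tfrac b\lambda\Omega$ if $\lambda>0$, away from it if $\lambda<0$; (c) by comparison, every trajectory stays between the trajectories generated by the constant controls $\omega_*$ and $\omega^*$.

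For $\lambda>0$ I claim $\CC_{\Sigma_\R}=\tfrac b\lambda\Omega$. Property~1 holds because for $s\in\tfrac b\lambda\Omega$ the constant control $\omega=\lambda s/b\in\Omega$ fixes $s$. Property~2 holds because from any such $s$ the control $\omega^*$ (resp.\ $\omega_*$) generates a trajectory increasing (resp.\ decreasing) monotonically toward $\tfrac{b\omega^*}{\lambda}=\sup\tfrac b\lambda\Omega$ (resp.\ toward $\inf\tfrac b\lambda\Omega$), hence sweeping through every point in between --- in finite time for an interior target, asymptotically for an endpoint --- so $\tfrac b\lambda\Omega\subset\operatorname{cl}\OC^{+}(s)$. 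For maximality and uniqueness: if $s_0>\sup\tfrac b\lambda\Omega$, then by (b) the state strictly decreases at $s_0$ for every control, while no trajectory starting at or below $\sup\tfrac b\lambda\Omega$ can ever reach $s_0$; hence any set containing $s_0$ violates Property~2 (the orbit closure of a point $<s_0$ misses $s_0$), so no control set meets $(\sup\tfrac b\lambda\Omega,\infty)$, and symmetrically none meets $(-\infty,\inf\tfrac b\lambda\Omega)$. Thus every control set lies inside $\tfrac b\lambda\Omega$, and since $\tfrac b\lambda\Omega$ is itself a control set, it is the only one.

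For $\lambda<0$ the band $\tfrac b\lambda\Omega$ still consists of the rest points, but now they are repelling, and I claim $\CC_{\Sigma_\R}=\operatorname{int}(\tfrac b\lambda\Omega)$. Property~1 holds as before using the interior control $\omega=\lambda s/b\in\operatorname{int}\Omega$. Property~2 holds because from any interior $s$ the control $\omega^*$ drives the state monotonically through all larger values and $\omega_*$ through all smaller ones (the repelling equilibrium lies at the far endpoint), so in fact $\OC^{+}(s)=\R$. Maximality --- which is exactly why the control set is open --- comes from (b): at an endpoint $e$ of $\tfrac b\lambda\Omega$ precisely one extreme control keeps $e$ fixed while every other control pushes the state monotonically away from $e$ and off to infinity, so $\operatorname{cl}\OC^{+}(e)$ is a closed half-line disjoint from $\operatorname{int}(\tfrac b\lambda\Omega)$; hence no set containing $e$ together with $\operatorname{int}(\tfrac b\lambda\Omega)$ can satisfy Property~2. (Each endpoint is by itself a trivial one-point control set; $\operatorname{int}(\tfrac b\lambda\Omega)$ is the unique control set with nonempty interior, which is the one meant in the statement.) Finally, for $\lambda=0$ we have $\dot s=b\omega$ with $b\neq0$ and $\omega_*<0<\omega^*$, so $\dot s$ can be made of either sign; hence $\OC^{+}(s)=\R$ for all $s$, the system is controllable, and $\R$ is the unique control set.

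The genuinely delicate case is $\lambda<0$. There one must check both that $\operatorname{int}(\tfrac b\lambda\Omega)$ really possesses the approximate-reachability property --- which works only because the expanding drift carries trajectories to infinity, so they pass through every interior target --- and that $\operatorname{int}(\tfrac b\lambda\Omega)$ is maximal, i.e.\ that neither endpoint can be adjoined; this rests on the constant-sign analysis in (b) applied at the endpoints. The cases $\lambda\ge0$ are routine once (a)--(c) are available.
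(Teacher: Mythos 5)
Your proof is correct and follows essentially the same route as the paper's: explicit solutions of the scalar affine equation under constant controls, monotone convergence to the equilibria $\tfrac{b}{\lambda}\omega$ to obtain invariance and approximate controllability on $\tfrac{b}{\lambda}\Omega$, and a sign analysis of $\dot s$ outside that interval to exclude further control sets. You actually supply more detail than the paper in the $\lambda<0$ case (which the paper dismisses as ``analogous''), and your observation that the two endpoints are then themselves singleton control sets is accurate under the paper's definition of control set, so your reading of the lemma as asserting uniqueness of the control set with nonempty interior is the right one.
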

\begin{proof}
The solutions of $\Sigma _\mathbb{R}$ are constructed by concatenations of the curves
\begin{equation*}
    \phi(\tau,s, \omega)= \rme^{-\tau\lambda} \bigg(s-\frac{b}{\lambda}\omega\bigg)+\frac{b}{\lambda}\omega.
\end{equation*}
$\bullet$~ Let $\lambda$ be positive and assume that $b>0 $ since the case  $b<0 $ is analoguous. Take any point $s_0 \in \frac{b}{\lambda}\Omega=\left[\frac{b}{\lambda}\omega_{\ast},\frac{b}{\lambda}\omega^{\ast}\right]$, then we have
\begin{align*}
 \phi(\tau,s_0,\omega)-\frac{b}{\lambda}\omega_{\ast}=&  \rme^{-\tau\lambda} \bigg(s_0-\frac{b}{\lambda}\omega\bigg)+\frac{b}{\lambda}\omega-\frac{b}{\lambda}\omega_{\ast} \geq \rme^{-\tau\lambda} \bigg(\frac{b}{\lambda}\omega_{\ast}-\frac{b}{\lambda}\omega\bigg)+\frac{b}{\lambda}\omega-\frac{b}{\lambda}\omega_{\ast}\\
 =&\underbrace{(-\rme^{-\tau\lambda} + 1)}_{>0} \underbrace{(\frac{b}{\lambda}\omega-\frac{b}{\lambda}\omega_{\ast})}_{\geq 0} \geq 0.
 \end{align*}
With similar calculations, we obtain the following 
\begin{align*}
 \phi(\tau,s_0,\omega)-\frac{b}{\lambda}\omega^{\ast}=&  \rme^{-\tau\lambda} \bigg(s_0-\frac{b}{\lambda}\omega\bigg)+\frac{b}{\lambda}\omega-\frac{b}{\lambda}\omega^{\ast} \leq e^{-\tau\lambda} \bigg(\frac{b}{\lambda}\omega{^\ast}-\frac{b}{\lambda}\omega\bigg)+\frac{b}{\lambda}\omega-\frac{b}{\lambda}w^{\ast}\\
 =&\underbrace{(\rme^{-\tau\lambda} - 1)}_{\geq 0} \underbrace{(\frac{b}{\lambda}\omega^{\ast} -\frac{b}{\lambda}\omega)}_{< 0} \leq 0.
 \end{align*}
Therefore, we find $\phi(\tau,s_0,\omega)\geq \frac{b}{\lambda}\omega_{\ast}$ and $\phi(\tau,s_0,\omega)\leq \frac{b}{\lambda}\omega^{\ast}$. It means that for all $\tau\geq 0$ and $\omega \in \Omega$
\begin{equation*}
   \phi\left(\tau,\frac{b}{\lambda}\Omega,\omega\right) \subset  \frac{b}{\lambda}\Omega ~~\Rightarrow~~\mathcal{O}^{+}(s_0)\subset \frac{b}{\lambda}\Omega \quad\text{for all}\quad s_0 \in \frac{b}{\lambda}\Omega.
\end{equation*}
Now let's take the points $s_0,s_1 \in \text{int}(\frac{b}{\lambda}\Omega)$ and suppose w.l.o.g that $s_0 < s_1$. Since
\begin{align*}
 \phi(\tau,s_0,\omega^{\ast})=& \rme^{-\tau\lambda} \bigg(s_0-\frac{b}{\lambda}\omega^{\ast}\bigg)+\frac{b}{\lambda}\omega^{\ast} \rightarrow \frac{b}{\lambda}\omega^{\ast} \quad \text{as} \quad \tau \rightarrow +\infty   \\
 \phi(\tau,s_1,\omega_{\ast})=& \rme^{-\tau\lambda} \bigg(s_1-\frac{b}{\lambda}\omega_{\ast}\bigg)+\frac{b}{\lambda}\omega_{\ast} \rightarrow \frac{b}{\lambda}\omega_{\ast} \quad \text{as} \quad \tau \rightarrow +\infty 
\end{align*}
then there exist $\tau_0,\tau_1 >0$ such that 
\begin{equation*}
 \phi(\tau_0,s_0,\omega^{\ast})=s_1 \quad \text{and}\quad \phi(\tau_1,s_1,\omega_{\ast}) =s_0.
\end{equation*}
Hence, we have that $\mathcal{O}^{+}(s_0)=\text{int}(\frac{b}{\lambda}\Omega)$ for all $s_0 \in \text{int}(\frac{b}{\lambda}\Omega)$ and obtain the following by continuity 
\begin{equation*}
    \mathcal{O}^{+}(s_0)=\frac{b}{\lambda}\Omega \quad \text{for all } \quad s_0 \in \frac{b}{\lambda}\Omega.
\end{equation*}
Consequently, it means that $\frac{b} {\lambda}\Omega$ is a control set of $\Sigma _\mathbb{R}$. \\

$\bullet$~ Now let us show that $\Sigma_\mathbb{R}$ does not admit control sets in $\mathbb{R} \setminus \frac{b}{\lambda}\Omega=(\frac{b} {\lambda}\omega^{\ast},+\infty)\cup (-\infty, \frac{b} {\lambda}\omega_{\ast})$. For any $s_0 \in \mathbb{R} \setminus \frac{b}{\lambda}\Omega$, we obtain that
\begin{equation*}
\phi(\tau,s_0,\omega)-s_0= e^{-\tau\lambda} \Big (s_0-\frac{b}{\lambda}\omega\Big )+\frac{b}{\lambda}\omega-s_0 = \Big (e^{-\tau\lambda}-1\Big ) \Big( s_{0}-\frac{b}{\lambda}w\Big ).
\end{equation*}
This allows us to achieve the following relations
\begin{align*}
s_0 > \frac{b}{\lambda}\omega^{\ast} ~~\Rightarrow~~\phi(\tau,s_0,\omega)\leq s_0 ~~\Rightarrow~~\mathcal{O}^{+}(s_0)\backslash\{s_0\}\subset (-\infty,s_0)
\end{align*}
and
\begin{equation*}
 s_0 < \frac{b}{\lambda}\omega^{\ast} ~~\Rightarrow~~\phi(\tau,s_0,\omega)\geq s_0 ~~\Rightarrow~~\mathcal{O}^{+}(s_0)\backslash\{s_0\}\subset (s_0,+\infty).   
\end{equation*}
As a result, if taken as $s_0,s_1 \geq \frac{b}{\lambda}\omega^{\ast} $ with $s_0 < s_1$ then $\Sigma _\mathbb{R}$ has no trajectory starting at $s_1$ and approaching an arbitrary point $s_0$. It means that any control set of $\Sigma _\mathbb{R}$ contained in $\Big(\frac{b} {\lambda}\omega^{\ast},+\infty\Big)$ cannot have two distinct points. Otherwise, it contradicts the condition of being a control set. Moreover, if $\{s_0\}$ is a control set of $\Sigma _\mathbb{R}$ contained in $\Big(\frac{b} {\lambda}\omega^{\ast},+\infty\Big)$, we have the following by using the definition of control sets
\begin{equation*}
 \exists \omega \in \Omega \quad   \forall \tau \in \mathbb{R}\quad  \phi(\tau,s_0,\omega)=s_0 ~~\Leftrightarrow~~ (\rme^{-\tau\lambda}-1)\Big(s_0-\frac{b} {\lambda}\omega\Big)=0,
\end{equation*}
which is definitely impossible. Hence, $\Sigma _\mathbb{R}$ does not admit control sets in $\Big(\frac{b} {\lambda}\omega^{\ast},+\infty\Big)$. Similarly, $\Sigma _\mathbb{R}$ does not admit control sets in $\Big(-\infty,\frac{b} {\lambda}\omega_{\ast}\Big)$. Consequently, we show the uniqueness of $\frac{b} {\lambda}\Omega$. Analogously to the above steps, $\mathcal{C}_{\mathbb{R}}=\text{int}~(\frac{b}{\lambda} \Omega)$ is obtained for the case $\lambda<0$. Finally, let us prove the case $\lambda=0$. The solutions of $\Sigma _\mathbb{R}$ are constructed by concatenations of the curves $\phi(\tau,s,\omega)=b\omega\tau+s_0$ since our dynamical system is $\dot{s}=\omega b$. Pick any arbitrary points $s_1,s_2$ with $s_1 < s_2$. In this case any $\omega_1,\omega_2 \in \Omega$ with $b\omega_1>0$ and $b\omega_2<0$, then we have
\begin{align*}
\tau_1 &= \frac{s_2 - s_1}{b\omega_1}>0 ~~\Rightarrow~~ \phi(\tau_1,s_1,\omega_1)=b\omega_1 \frac{s_2 - s_1}{b\omega_1}+s_1 =s_2\\
\tau_2 &= \frac{s_1 - s_2}{b\omega_2}>0 ~~\Rightarrow~~ \phi(\tau_2,s_2,\omega_2)=b\omega_2 \frac{s_1 - s_2}{b\omega_2}+s_2 =s_1
\end{align*}
Therefore we achieve the controllability of $\Sigma _\mathbb{R}$ over the entire set of real numbers.
\end{proof}

\bigskip

The previous lemma will be used in our next result.

\begin{theorem}\label{examplo}
For the one-input LCS on $\R\times\T$
\begin{equation*}
(\Sigma_{1,1}):\quad \left\{ 
\begin{array}{l}
\dot{s}=-\lambda s+\omega b \\ 
\dot{[t]}=\frac{1}{2}\alpha s^{2}+\gamma s+\omega (c+as)
\end{array}%
\right.,   \hspace{.5cm} \omega\in\Omega ,
\end{equation*}%
where $a, b, c, \alpha, \gamma, \lambda\in\R$ and $\alpha=0$ if $\gamma\neq 0$, it holds:

\begin{enumerate}
    \item $\Sigma_{1,1}$ satisfies the LARC if and only if 
\begin{equation*}
b(2a\lambda +b\alpha )\neq 0 \quad \text{or} \quad b(b\gamma +\lambda c)\neq 0.
\end{equation*}

\item Under the LARC, the set $\mathcal{C}_{\Sigma_{1,1}}=\mathcal{C}_{\Sigma_{\mathbb{R}}} \times \T$ is the only control set of $\Sigma_{1,1}$,  where $\Sigma_{\mathbb{R}}$ is the LCS on $\R$ given by the first equation on $\Sigma_{1,1}$.
\end{enumerate}

\end{theorem}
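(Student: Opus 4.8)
The plan is to split the argument along the two assertions, treating the LARC first by a direct bracket computation and then handling the control sets by exploiting the skew-product structure of $\Sigma_{1,1}$ over $\Sigma_{\mathbb{R}}$.

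\emph{Part 1 (LARC).} I would write the drift and control vector fields on $\R\times\T$ explicitly as $f_0(s,[t])=\bigl(-\lambda s,\tfrac12\alpha s^2+\gamma s\bigr)$ and $f_1(s,[t])=\bigl(b,\,c+as\bigr)$, then compute the iterated Lie brackets $[f_0,f_1]$, $[f_0,[f_0,f_1]]$, $[f_1,[f_0,f_1]]$, etc., and ask when the resulting vectors span $T_{(s,[t])}(\R\times\T)=\R^2$ at every point. Because the $s$-component of $f_1$ is the constant $b$, if $b=0$ the whole system degenerates (every bracket keeps first component $0$ unless the drift moves it, but the drift's first component vanishes on the invariant line $s=0$), so $b\neq 0$ is forced; this explains the factor $b$ in both conditions. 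With $b\neq 0$, the span fails exactly when the second components of all brackets are proportional to $b$ (the second component of $f_1$) in a way that collapses the $2\times 2$ determinant; carrying out the bracket of $f_0$ with $f_1$ gives a second component whose leading behaviour involves $2a\lambda+b\alpha$ and $b\gamma+\lambda c$, and a short computation shows the rank drops to $1$ everywhere precisely when $b(2a\lambda+b\alpha)=0$ \emph{and} $b(b\gamma+\lambda c)=0$. I expect this to be a routine but slightly fiddly calculation; the one subtlety is to check that failure of the bracket condition at one point forces it at all points, which follows because the obstruction is a polynomial identity in $s$.

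\emph{Part 2 (control sets).} Here the key structural observation is that $\Sigma_{1,1}$ is a skew product: the $s$-equation $\dot s=-\lambda s+\omega b$ is exactly $\Sigma_{\mathbb{R}}$ and does not depend on $[t]$, while the $[t]$-fibre is a circle on which, for each fixed admissible $s(\cdot)$ and $\omega(\cdot)$, the second equation is a (time-dependent) affine scalar ODE mod $\Z$. I would first show $\mathcal{C}_{\Sigma_{\mathbb{R}}}\times\T$ satisfies the two control-set axioms: the forward-invariance-with-a-control axiom is inherited from Lemma \ref{lemma} in the $s$-variable (pick the control that keeps $s$ in $\mathcal{C}_{\Sigma_{\mathbb{R}}}$), and the approximate-reachability axiom needs that from any $(s_0,[t_0])$ with $s_0\in\mathcal{C}_{\Sigma_{\mathbb{R}}}$ one can approximately reach any $(s_1,[t_1])$ with $s_1\in\mathcal{C}_{\Sigma_{\mathbb{R}}}$. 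For the latter I would use that, under the LARC, the $[t]$-dynamics is genuinely actuated: one can steer $s$ between interior points of $\mathcal{C}_{\Sigma_{\mathbb{R}}}$ (by Lemma \ref{lemma}) while the accumulated increment in $t$ can be made to take any real value — because winding the circle is possible once the bracket condition guarantees the $t$-drift is not rigidly tied to the $s$-motion — and then close up mod $\Z$. Finally I would prove maximality and uniqueness: any control set $\mathcal{D}$ projects under $(s,[t])\mapsto s$ to a set with the control-set property for $\Sigma_{\mathbb{R}}$ (projections of control sets are contained in control sets), hence into $\mathcal{C}_{\Sigma_{\mathbb{R}}}$ by Lemma \ref{lemma}, and since the fibre $\T$ is compact and fully reachable, $\mathcal{D}$ must fill out $\mathcal{C}_{\Sigma_{\mathbb{R}}}\times\T$.

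\emph{Main obstacle.} The delicate point is the approximate reachability in the circle direction: one must show that the net change of $[t]$ along trajectories that keep $s$ inside $\mathcal{C}_{\Sigma_{\mathbb{R}}}$ can realise \emph{every} element of $\T$, i.e.\ that the $t$-increment is not constrained to a proper subgroup or a one-sided cone. This is exactly where the LARC from Part 1 must be fed back in: the conditions $b(2a\lambda+b\alpha)\neq0$ or $b(b\gamma+\lambda c)\neq0$ are what prevent the quantity $\tfrac12\alpha s^2+\gamma s+\omega(c+as)$ from being (after reparametrisation) an exact derivative of a function of $s$ alone, which would otherwise pin the total winding to a deterministic value. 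I would make this precise by computing, for a small loop in the $s$-variable traversed with two different constant controls, the resulting holonomy in $t$ and showing it is nonzero under the LARC; iterating such loops then yields a dense — hence, by compactness of $\T$ and continuity, all — set of attainable fibre translations.
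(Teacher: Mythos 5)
Your Part 1 follows essentially the same route as the paper: a direct bracket computation showing $[\widehat{\XC}_{1,1},\widehat{B}_{1,1}]=\lambda\widehat{B}_{1,1}-(sZ_1+Z_2)$ with $Z_1=(0,2\lambda a+b\alpha)$, $Z_2=(0,\lambda c+b\gamma)$, and the observation that all higher brackets stay in the span of $\widehat{\XC}_{1,1},\widehat{B}_{1,1},Z_1,Z_2$, so that LARC holds iff $b\neq 0$ and $(2a\lambda+b\alpha,\,b\gamma+\lambda c)\neq(0,0)$. That part is fine and at the same level of detail as the paper's own argument.

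In Part 2 your skeleton (skew product over $\Sigma_{\R}$, forward invariance and uniqueness pulled back from Lemma \ref{lemma}) matches the paper, but the crucial step --- realizing every fibre translation in $\T$ --- is where your plan has a genuine gap. You propose to compute the holonomy of a small loop in $s$ and then ``iterate such loops to get a dense set of attainable fibre translations.'' Iterating a single loop with holonomy $\delta$ only produces the multiples $\{n\delta \bmod 1\}$, which is dense in $\T$ only when $\delta$ is irrational; if $\delta\in\Q$ you get a finite set, so the conclusion does not follow as stated. You would at least need to vary the loop (or the dwell time) continuously to obtain an interval of holonomies around $0$ and then concatenate --- and you never actually verify that the holonomy is nonzero under the LARC, you only assert that you would. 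The paper avoids all of this with a simpler mechanism: for $\lambda\neq 0$, park at the equilibrium $s=\tfrac{b}{\lambda}\omega_1$ of the $s$-equation under the constant control $\omega_1$; there the $t$-coordinate rotates at the constant rate $p(\omega_1)=\tfrac{b}{2\lambda^2}(b\alpha+2a\lambda)\omega_1^2+\tfrac{1}{\lambda}(b\gamma+c\lambda)\omega_1$, which by the LARC is a nontrivial polynomial, hence nonzero for all but at most two values of $\omega_1$; letting time run sweeps the whole circle, and density of the good fibres gives the control-set property. For $\lambda=0$ one instead sits at a fixed $s_1\neq 0$ with control $\omega=0$, where the rotation rate is $q(s_1)=\tfrac12\alpha s_1^2+\gamma s_1\neq 0$. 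If you want to keep your holonomy approach you must (i) prove the holonomy is nonzero using the LARC, and (ii) replace ``iterate one loop'' by an argument producing a full interval of increments; otherwise adopt the equilibrium argument, which also supplies the missing case split between $\lambda\neq 0$ and $\lambda=0$.
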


\begin{proof} 1. Let us show that $\mathrm{span}_{\mathcal{L}A}\{\widehat{\XC}_{1,1}, \widehat{B}_{1,1}\}(s,[t])= \R^2 $ for all $(s, [t])\in \R \times \R$, where 
$$\widehat{\XC}_{1,1}(s, [t])(-\lambda s, \frac{1}{2}\alpha s^2+\gamma s)=\hspace{.5cm}\mbox{ and }\hspace{.5cm}\widehat{B}_{1,1}(s, t)=(b, c+as).$$
Firstly, looking at the Lie bracket of $\widehat{\XC}_{1, 1}$ and $\widehat{B}_{1, 1}$ we have that
\begin{align*}
 [\widehat{\XC}_{1,1}, \widehat{B}_{1,1}]=&\bigg(b\lambda ,-a\lambda s-b(\alpha s+\gamma) \bigg) \\
 =& \lambda \widehat{B}_{1,1} - \bigg\{ \underbrace{(0,2\lambda a + b \alpha)}_{:=Z_1}s + \underbrace{(0,\lambda c + b \gamma)}_{:=Z_2} \bigg\} \\
 =& \lambda \widehat{B}_{1,1} - (sZ_1+Z_2).
 \end{align*}
Then let's consider the other brackets, respectively:
\begin{align*}
 [sZ_1 , \widehat{\XC}_{1,1}] &= \lambda s Z_1  && [sZ_1 , \widehat{B}_{1,1}] = -b Z_1 \\
 [Z_2 , \widehat{\XC}_{1,1}] &=0  &&   [Z_2 , \widehat{B}_{1,1}]=0\\
 [[\widehat{\XC}_{1,1}, \widehat{B}_{1,1}],\widehat{\XC}_{1,1}] &=-\lambda^2 \widehat{B}_{1,1}+2\lambda Z_2 && [[\widehat{\XC}_{1,1}, \widehat{B}_{1,1}],\widehat{B}_{1,1}]=2bZ_1.
\end{align*}
If it is continued in this process, we see that all brackets just depend on the vector fields $\widehat{\XC}_{1,1},\widehat{B}_{1,1}, Z_1$ and $Z_2$. Finally, one can obtain that LARC satisfied if and only if $bZ \neq 0$ where $Z=sZ_1+Z_2$. Thus the proof is completed.

2. Let us assume that $\lambda >0$. For all $P \in \R\times\T$ and $\omega\in\Omega$ we write the solution of the $\Sigma_{1,1}$ as
\begin{equation*}
\phi(\tau,P,w)=\Big(\phi_1(\tau,P,w),\phi_2(\tau,P,w)\Big),
\end{equation*}
and notice that $\phi_1$ is actually the solution of the associated system $(\Sigma _\mathbb{R}):~\dot{s}=-\lambda s+\omega b$. Since we are assuming the LARC, $b\neq 0$ and by Lemma \ref{lemma}, we have that the control set $\CC_{\Sigma_{\R}}=\frac{b}{\lambda}\Omega$ is positively-invariant, implying that 
$$\OC^+(P)\subset \CC_{\Sigma_{\R}}\times\T, \hspace{.5cm}\mbox{ for all }\hspace{.5cm}P\in \CC_{\Sigma_{\R}}\times\T.$$

Let us consider the polynomial $p(\omega)$ given by 
$$p(\omega)=\frac{b}{2\lambda^2}(b\alpha+2a\lambda)\omega^2+\frac{1}{\lambda}(b\gamma+c\lambda)\omega.$$
By the LARC, $p(\omega)$ is a nontrivial polinomial with, at most, two zeros in $\Omega$.  Consider now $$P_0, P_1\in\inner(\CC_{\Sigma_{\R}}\times\T),\hspace{.5cm}\mbox{ with } \hspace{.5cm}P_1=\left(\frac{b}{\lambda}\omega_1, [t_1]\right) \hspace{.5cm}\mbox{ and }\hspace{.5cm} p(\omega_1)\neq 0.$$
Since, by Lemma \ref{lemma}, controllability holds in $\inner\CC_{\Sigma_{\R}}$, there exists a positive time $\tau_0$ and a control $\omega_0$ such that 
$$\phi_1(\tau_0, P_0, \omega_0)=\frac{b}{\lambda}\omega_1\hspace{.5cm}\implies\hspace{.5cm}\phi(\tau_0, P_0, \omega_0)=\left(\frac{b}{\lambda}\omega_1,\underbrace{\phi_2(\tau_0,P_0,\omega_0)}_{=[\hat{t}_1] \in \mathbb{T}} \right) := \hat{P_1}$$

On the other hand, a simple calculation show us that, for all $\tau\in\R$, 
$$\phi_1(\tau, \hat{P_1}, \omega_1)=\frac{b}{\lambda}\omega_1 \hspace{.5cm}\mbox{ and }\hspace{.5cm}\phi_2(\tau, \hat{P_1}, \omega_1)=\left[\hat{t}_1
+ \tau \cdot p(\omega_1)\right],$$
and hence, the assumption $p(\omega_1)\neq 0$ implies the existence of $\tau_1>0$ such that $\phi_2(\tau_2, \hat{P}_1, \omega_1)=[t_1]$ and so 
$\phi(\tau_1, \hat{P}_1, \omega_1)=P_1$.
As a consequence, if $p(\omega)=0$, we have that
$$(\CC_{\Sigma_{\R}}\times\T)\setminus(\{0, \omega\}\times\T)\subset\OC^+(P)\hspace{.5cm}\mbox{ for all }\hspace{.5cm}P\in (\CC_{\Sigma_{\R}}\times\T)\setminus(\{0, \omega\}\times\T).$$

Since $(\CC_{\Sigma_{\R}}\times\T)\setminus(\{0, \omega\}\times\T)$ is dense in $\CC_{\Sigma_{\R}}\times\T$ we conclude that 
$$\CC_{\Sigma_{\R}}\times\T=\overline{\OC^+(P)}\hspace{.5cm}\mbox{ for all }\hspace{.5cm}P\in\CC_{\Sigma_{\R}}\times\T,$$
showing that $\CC_{\Sigma_{1, 1}}=\CC_{\Sigma_{\R}}\times\T$ is a control set of $\Sigma_{1, 1}$ (See Figure \ref{fig1} (b)). Uniqueness of $\CC_{\Sigma_{1, 1}}$ follows direct from the fact that $\CC_{\Sigma_{\R}}$ is the only control set of the associated system $\Sigma_{\R}$.

Since the case for $\lambda <0$ is analogous to the previous one, let us now examine the case where $\lambda=0$. In this case, by Lemma \ref{lemma}, the control set $\CC_{\Sigma_{\R}}$ of $\Sigma_{\R}$ is the whole real line and hence, we have to show that $\Sigma_{1, 1}$ is controllable.

Similarly as the previous case, let us consider the polynomial 
$$q(s)=\frac{1}{2}\alpha s^2+\gamma s.$$
By the LARC, $q(s)$ is a nonzero polynomial. Moreover, the fact that $\gamma\neq 0\implies \alpha=0$, gives us that $s=0$ is the only root of $q$.

Take any two points $P_0=(s_0, [t_0])$ and $P_1 =(s_1,[t_1])$ in $\mathbb{R} \times \mathbb{T}$. Let us investigate the following cases for determining the trajectory from $P_0$ to $P_1$ (See Figure \ref{fig1} (a)).

\begin{itemize}
		\item[(a)] If $s_1\neq 0$, the fact that $\Sigma_{\R}$ is controllable, assures the existence of $\omega_0$ and $\tau_0$ such that 
		$$\phi(\tau_0, P_0, \omega_0)=(s_1, [\hat{t}_1])=:\hat{P}_1.$$
		
		By considering the control $\omega=0$, we have that 
		$$\phi(\tau, \hat{P}_1, 0)=(s_1, \left[\hat{t}_1+\tau \cdot q(s_1)\right]).$$
		As a consequence, there exists $\tau_1>0$ such that $\phi(\tau, \hat{P}_1, 0)=(s_1, [t_1])$, showing, by concatenation, that we can reach $P_1$ from $P_0$, when $s_1\neq 0$.

\item[(b)] If $s_1=0$, take $\omega\neq 0$ and $\tau'>0$. Then,
$$P'_1:=\phi_1(-\tau, P_1, \omega)\hspace{.5cm}\mbox{ satisfies } \hspace{.5cm}\phi(\tau, P'_1, \omega)=P_1,$$
and the first component of $P'_1$ is $s_1'=-\tau b\omega\neq 0$. By the previous item, there exists a trajectory connecting $P_0$ and $P_1'$ and hence, we can connect $P_0$ to $P_1$, concluding the prove.
\end{itemize}

\begin{figure}[H] 
	\centering
	\includegraphics[scale=.6]{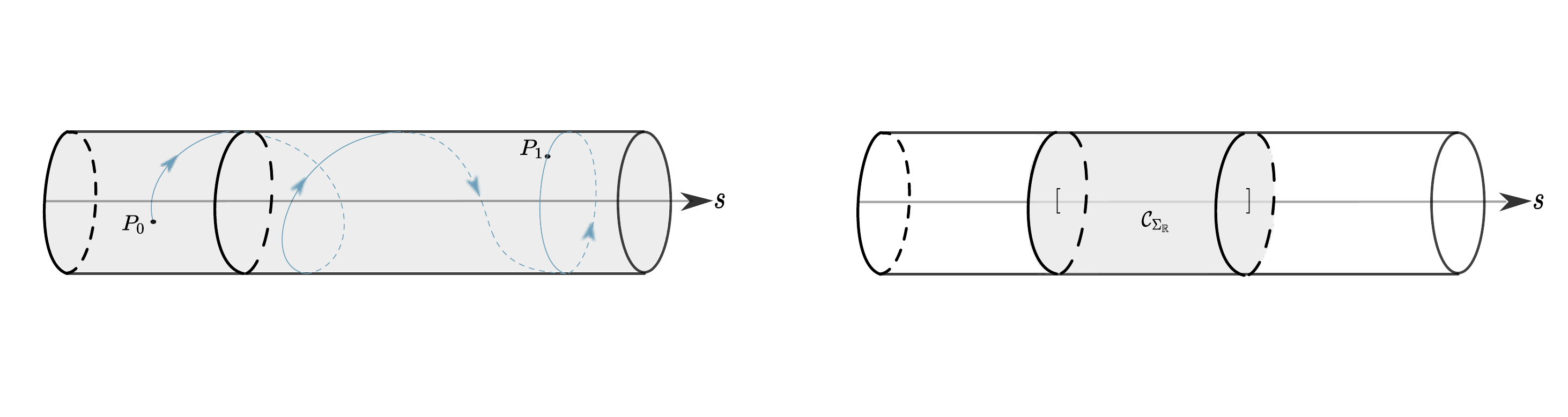}
 \vspace*{-10mm}
	\caption{(a) Trajectory connecting $P_0$ and $P_1$\hspace{4cm} (b) Control set for nonzero $\lambda$.\hspace{-3cm}}
	\label{fig1}
\end{figure}

\end{proof}

\newpage

\end{document}